\let\oldlabel=\label
\def\prellabel{\marginparsep=1em
    \def\label##1{\oldlabel{##1}\ifmmode\else\ifinner\else
         \marginpar{{\color{blue} \footnotesize\ \\ \tt
                    ##1}}\fi\fi}}
\let\Bbb=\mathbb
\def\Ker{\operatorname{Ker}}%%
\def\PP{{\Bbb P}}%%
\def\cA{\operatorname{\mathcal A}}
\def\cL{\operatorname{\mathcal L}}
\def\P{\mathcal P}
\def\Q{\mathcal Q}
\def\a{\mathbf a}
\def\e{\mathbf e}
\def\t{\mathbf t}
\def\v{\mathbf v}
\def\x{\mathbf x}
\def\y{\mathbf y}
\def\z{\mathbf z}
\def\m{\mathbf m}
\def\vertex{\operatorname{vert}}
\def\ehr{\operatorname{ehr}}
\def\gv{\operatorname{gv}}
\def\Proj{\operatorname{Proj}}
\def\Pic{\operatorname{Pic}}
\def\starr{\operatorname{star}}
\def\inte{\operatorname{int}}
\def\Spec{\operatorname{Spec}}%
\def\L{\operatorname{L}}%%
\def\conv{\operatorname{conv}}%
\def\gp{\operatorname{gp}}
\def\CC{{\Bbb C}}
\def\RR{{\Bbb R}}
\def\ZZ{{\Bbb Z}}
\def\NN{{\Bbb N}}
\def\kk{{\Bbb K}}
\def\Def{\textbf}
\newtheorem{lemma}{Lemma}[section]
\newtheorem{theorem}[lemma]{Theorem}
\newtheorem{proposition}[lemma]{Proposition}
\newtheorem{conjecture}[lemma]{Conjecture}
\theoremstyle{definition}
\newtheorem{definition}[lemma]{Definition}
\newtheorem{example}[lemma]{Example}
\begin{document}

\title{Very ample and Koszul segmental fibrations} % [Koszul segmental fibrations]

\author{Matthias Beck}
\author{Jessica Delgado}
\author{Joseph Gubeladze}
\author{Mateusz Micha$\l$ek}

\address{Department of Mathematics\\
         San Francisco State University\\
         1600 Holloway Ave.\\
         San Francisco, CA 94132, USA}
\email{[mattbeck,soso]@sfsu.edu}
\address{Kahikaluamea Department \\
4303 Diamond Head Rd. \\
Honolulu, HI 96816, USA}
\email{Delgado3@hawaii.edu}
\address{Polish Academy of Sciences\\
         ul. \'Sniadeckich 8\\
         00-956 Warsaw\\
         Poland}
\email{wajcha2@poczta.onet.pl}

\thanks{This research was partially supported by the U.\ S.\ National Science Foundation through the grants DMS-1162638 (Beck), DGE-0841164 (Delgado),
DMS-1000641 \& DMS-1301487 (Gubeladze), and the Polish National Science Centre grant no 2012/05/D/ST1/01063 (Micha{\l}ek)}

\date{5 December 2014}
\subjclass[2010]{Primary 52B20; Secondary 13P10, 14M25}
\keywords{Lattice polytope, normal polytope, very ample polytope, Koszul polytope, smooth polytope, projective
toric variety, Ehrhart polynomial, quadratic Gr\"obner basis, unimodular triangulation, regular triangulation, flag triangulation}

\maketitle

\begin{abstract}
In the hierarchy of structural sophistication for lattice polytopes, normal polytopes mark a point of origin; very ample and Koszul polytopes occupy bottom and top spots in this hierarchy, respectively.

In this paper we explore a simple construction for lattice polytopes with a twofold aim. On the one hand, we derive an explicit series of very ample 3-dimensional
polytopes with arbitrarily large deviation from the normality property, measured via the highest discrepancy degree between the corresponding Hilbert functions and
Hilbert polynomials. On the other hand, we describe a large class of Koszul polytopes of arbitrary dimensions, containing many smooth polytopes and extending the previously known class of Nakajima polytopes.
\end{abstract}

%*************************************************

\section{Introduction}\label{Intro}

Normal polytopes show up in various contexts, including algebraic geometry (via toric varieties), integer programming (integral Carath\'eodory property), combinatorial commutative
algebra (quadratic Gr\"ob\-ner bases of binomial ideals), and geometric combinatorics (Ehr\-hart theory). Section \ref{ClassesofPolytopes} below introduces all relevant classes of polytopes and provides background material which also serves as motivation.

The weakest of the general properties, distinguishing a polytope from random lattice polytopes, is \emph{very ampleness}. In geometric terms, very ample polytopes
correspond to normal projective but not necessarily projectively normal embeddings of toric varieties. The finest of the algebraic properties a lattice polytope can have is
the \emph{Koszul} property, which means that the polytope in question is homologically wonderful; the name draws its origin from Hilbert's Syzygy Theorem. It says that (the algebra of) a unit lattice simplex, i.e., the polynomial algebra with the vertices of the simplex as variables, is Koszul with the standard Koszul complex on the vertices as a certificate.

Koszul algebras were discovered by topologists in the early 1970s and, with the advent of powerful theoretical, computational, and practical tools, the topic of
Koszul polytopal algebras became a popular topic in algebraic combinatorics starting from the early 1990s.

By contrast, the very ample polytopes, formally treatable
within the framework of elementary additive number theory, came under spotlight more recently. Partly this can be explained by the traditional inclusion of the
normality property in the definition of toric varieties, so a property weaker than normality seemed unnatural. The very question of existence of very ample non-normal polytopes became interesting only in the late 1990s.

There is a whole panorama of interesting classes of lattice polytopes sandwiched between the very ample and Koszul ones, but we are not delving into them. In this
paper, by exploring a simple polytopal construction called \emph{lattice segmental fibrations} (Definition \ref{fibration} below), we detect very ample lattice polytopes, arbitrarily far away from the normality property (Theorem \ref{highgaps}), and construct a new large class of Koszul polytopes (Theorem \ref{koszulclass}), containing many examples of smooth polytopes. A subclass of Koszul polytopes was described in \cite{DHZ} and in Section \ref{Koszul} we explain that the argument there works for our general class as well.

\bigskip\noindent\emph{Acknowledgement.} We thank Winfried Bruns and Serkan Ho\c{s}ten for helpful comments and providing us with an invaluable set
of examples of very ample polytopes. We also thank Milena Hering for pointing out the overlap of our work with \cite{DHZ} and two anonymous referees for helpful
comments. The last author also thanks Mathematisches Forschungsinstitut Oberwolfach for the great working atmosphere and hosting.

\section{Normal, very ample, and Koszul polytopes}\label{ClassesofPolytopes}

In this section we introduce three polytopal classes and give a characterization of the very ampleness condition (Proposition \ref{veryample}), which we were not able to find in the literature in the given generality.

\subsection{Normal polytopes}\label{NormalPolytopes}

A \Def{(convex) polytope} $\P$ is the convex hull of a finite subset of $\RR^d$. The inclusion-minimal such set
$\vertex(\P)$ consists of the \Def{vertices} of $\P$.
If $\vertex(\P)$ is affinely independent, $\P$ is a \Def{simplex}.
A polytope $\P\subset\RR^d$ is \Def{lattice} if $\vertex(\P$)$\subset\ZZ^d$.
For a lattice polytope $\P\subset\RR^d$ we denote by $\L(\P)$ the subgroup of $\ZZ^d$ that is affinely generated by the lattice points in $\P$, i.e.,
$$
\L(\P)=\sum_{\x, \, \y\in \P\cap\ZZ^d}\ZZ(\x-\y)\subset\ZZ^d.
$$
A lattice polytope $\P\subset\mathbb{R}^d$ is called
\begin{enumerate}[(a)]
\item \Def{integrally closed} if for every natural number $c$ and every point $\z\in c \, \P\cap\ZZ^d$ there exist
$\x_1,\x_2,\ldots,\x_c\in \P\cap\ZZ^d$ such that $\x_1+\x_2+\dots+\x_c=\z$;
\item \Def{normal} if for some (equivalently, every) point $\t\in \P\cap \mathbb{Z}^d$ the following condition is satisfied: for every natural number
$c$ and every point $\z\in c \, \P\cap(c\t+\L(\P))$ there exist $\x_1,\x_2,\ldots,\x_c\in \P\cap\ZZ^d$ such that $\x_1+\x_2+\dots+\x_c=\z$.
\end{enumerate}
\noindent(For $\t$ in (b), we have $\P\cap(\t+\L(\P))=\P\cap\ZZ^d$.)

One easily observes that a lattice polytope $\P\subset\RR^d$ is integrally closed if and only if it
is normal and $\L(\P)$ is a direct summand of $\ZZ^d$. In particular, a normal polytope $\P$ becomes full-dimensional
and integrally closed if one changes the ambient space $\RR^d$ to $\RR\L(\P)$ and the lattice of reference to $\L(\P)$.
This explains why the difference between \emph{normal} and \emph{integrally closed} is often blurred in the literature.

An \Def{empty} simplex (i.e., a lattice simplex that contains no lattice points besides its vertices) of large volume
provides an example of a normal but not integrally closed polytope. The classification of empty simplices is an
active area of research.
One class of empty simplices is important for this paper: a lattice $n$-simplex $\conv(\x_0,\x_1,\ldots,\x_n)\subset\RR^d$ is \Def{unimodular} if $\{\x_1-\x_0,\ldots,\x_n-\x_0\}$ is a part of a basis of $\ZZ^d$. Unimodular
simplices are integrally closed, and if a lattice polytope is a union of unimodular simplices, i.e., admits a \Def{unimodular cover}, then it is integrally closed. Not
all lattice 4-polytopes with a unimodular cover admit triangulations into unimodular simplices---an example \cite[Proposition 1.2.4(c)]{BrGuTr} has been shown by
effective methods to have a unimodular cover---and not all integrally closed 5-polytopes are covered by unimodular simplices~\cite{uncovered}.

\subsection{Point configurations and Proj}\label{Proj}

Our next two classes of polytopes are best explained by providing an algebraic context.
We denote the \Def{conical hull} of $X\subset\RR^d$ by $\RR_{\ge0}X$. An \Def{affine monoid} is a finitely
generated additive submonoid of $\ZZ^d$ for some $d\in\NN$. For a finite subset $X=\{\x_1,\ldots,\x_n\}\subset\ZZ^d$ the affine monoid generated by $X$ will be denoted by
\[
  \ZZ_{\ge0}X=\ZZ_{\ge0}\, \x_1+\cdots+\ZZ_{\ge0}\, \x_n \, .
\]
The group of differences of an affine monoid $M\subset\ZZ^d$ (i.e., the subgroup of $\ZZ^d$ generated by $M$) is denoted by $\gp(M)$, and the \Def{normalization} of
$M$ is the following affine monoid
$$
\overline M:=\gp(M)\cap\RR_{\ge0}M=\{\x\in\gp(M)\ |\ n \, \x\in M\ \text{for some}\ n\in\NN\} \, .
$$
For a field $\kk$ and an affine monoid $M\subset\ZZ^d$, the normalization of the monoid ring $\kk[M]$  (i.e., its integral closure in the field of fractions) equals
$\kk[\overline M]$ (see, e.g., \cite[Section 4.E]{Kripo}). The monoid algebra $\kk[M]$ can be thought of as the monomial subalgebra of $\kk[X_1^{\pm1},\ldots,X_d^{\pm1}]$ spanned by the Laurent monomials with exponent vectors in~$M$.

We will refer to a finite subset $\cA\subset\ZZ^d$ as a \Def{point configuration}.  For a point configuration $\cA$ and a field $\kk$, we let $\kk[\cA]$ denote the monoid $\kk$-algebra of the affine monoid
\[
  M_{\cA}:=\sum_{\x \in \cA}\ZZ_{\ge0}(\x,1)\subset\ZZ^{d+1}.
\]
It is natural to call the last coordinate of a point $\y \in \overline{M_{\cA}}$ the \Def{height} of~$\y$.

A \Def{grading} on an affine monoid $M$ is a partition $M=\bigcup_{i \in \ZZ_{\ge0}}M_i$, where $M_0=\{0\}$ and $M_i+M_j\subset
M_{i+j}$. For a field $\kk$ and a graded affine monoid $M$, the monoid algebra $\kk[M]$ is graded in the natural way, where the $0$th component equals $\kk$.

The monoids of type $M_{\cA}$, where $\cA$ is a point configuration, are naturally graded with respect to the last
coordinate, and they are generated in degree 1. In particular, for a field $\kk$ and a point configuration $\cA$, the graded algebra $\kk[\cA]$ is \Def{homogeneous}:
\[
\kk[\cA]=\kk\oplus A_1\oplus A_2\oplus\cdots,\qquad \kk[\cA]=\kk[A_1] \, , \qquad A_1=\kk(\cA,1) \, .
\]

For a lattice polytope $\P\subset\RR^d$, the corresponding \Def{polytopal monoid} is defined by $M_\P:=M_{\cA}$ where $\cA=\P\cap\ZZ^d$.
We denote $\L(\cA):=\sum_{\x,\y\in\cA}\ZZ(\x-\y)\subset\ZZ^d$. Thus, using the previous notation, for a lattice polytope $\P\subset\ZZ^d$, we have $\L(\P)=\L(\P\cap\ZZ^d)$.

For an algebraically closed field $\kk$ and a point configuration $\cA\subset\ZZ^d$, we have the projective variety
\[
  X_{\cA}:=\Proj(\kk[\cA])\subset\PP^{N-1}_\kk \, ,
\]
where $N=\#\cA$, and the resulting very ample line bundle $\cL_{\cA}\in\Pic(X_{\cA})$ (see, e.g., \cite[Ch.~II,~\S7]{Algeo}).

The following proposition is a semi-folklore result.

\begin{proposition}\label{veryample}
Let $\kk$ be an algebraically closed field, $\cA\subset\ZZ^d$ a point configuration, and $\P:=\conv(\cA)$. Then the following are equivalent:
\begin{enumerate}[{\rm (a)}]
\item $X_{\cA}$ is normal;
\item $\bigoplus_{i\ge0}H^0(X_{\cA},\cL_{\cA}^{\otimes i})=\kk\big[\RR_{\ge0}(\P,1)\cap\gp(M_{\cA})\big]$;
\item $\RR_{\ge0}(\P-\v)\cap\L(\cA)=\ZZ_{\ge0}(\cA-\v)$ for every $\v\in\vertex(\P)$;
\item $\#(\RR_{\ge0}(\P,1)\cap\gp(M_{\cA}))\setminus M_{\cA}<\infty$;
\item $X_{\cA}$ is a projective toric variety.
\end{enumerate}
\end{proposition}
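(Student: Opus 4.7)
The plan is to establish (a) $\Leftrightarrow$ (e) as essentially tautological and to prove the remaining equivalences in a circular chain (a) $\Rightarrow$ (b) $\Rightarrow$ (d) $\Rightarrow$ (c) $\Rightarrow$ (a). For (e) $\Leftrightarrow$ (a): $X_\cA$ is by construction the projective closure of the image of a torus map, hence irreducible and containing a dense torus orbit, so calling it a projective toric variety (in the modern sense, which automatically requires normality) is the same as asserting its normality.

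For (a) $\Rightarrow$ (b), I would invoke two standard ingredients: on a normal projective variety the section ring of a very ample line bundle equals the integral closure of the embedded homogeneous coordinate ring, and for a point configuration the integral closure of $\kk[\cA]$ inside its fraction field is the monoid algebra of $\overline{M_\cA}=\gp(M_\cA)\cap\RR_{\ge0}(\P,1)$. Composing the two identifies the section ring with $\kk[\overline{M_\cA}]$, which is (b). For (b) $\Rightarrow$ (d), Serre's theorem provides $\kk[\cA]_i=H^0(X_\cA,\cL_\cA^{\otimes i})$ for all $i\gg0$; combined with (b), this forces $M_\cA$ and $\overline{M_\cA}$ to agree at all sufficiently large heights, and since each height-layer is a finite set, $\overline{M_\cA}\setminus M_\cA$ is finite.

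For (d) $\Rightarrow$ (c), fix $\v\in\vertex(\P)$ and $\z\in\L(\cA)\cap\RR_{\ge0}(\P-\v)$; write $\z=\lambda(\w-\v)$ with $\w\in\P$ and $\lambda\ge0$. For each integer $h\ge\lambda$, the point $(\z+h\v,h)=(\z,0)+h(\v,1)$ lies in $\overline{M_\cA}$: the conical containment follows from $(\z+h\v)/h=(1-\lambda/h)\v+(\lambda/h)\w\in\P$, and the lattice membership is clear since $h(\v,1)\in M_\cA$ and $(\z,0)\in\gp(M_\cA)$ (as $\z\in\L(\cA)$). By (d), all but finitely many of these infinitely many points belong to $M_\cA$, so fix an $h$ for which $(\z+h\v,h)=\sum_{i=1}^h(\x_i,1)$ with $\x_i\in\cA$; then $\z=\sum_{i=1}^h(\x_i-\v)\in\ZZ_{\ge0}(\cA-\v)$. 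For (c) $\Rightarrow$ (a), each distinguished open $D_+(X_\v)\subset X_\cA$ indexed by $\v\in\vertex(\P)$ is $\Spec\kk[\ZZ_{\ge0}(\cA-\v)]$, and (c) identifies this monoid with its saturation in $\L(\cA)$, making each vertex chart a normal affine toric variety; the vertex charts cover $X_\cA$ because the torus-orbit stratification of $X_\cA$ refines the face stratification of $\P$ and every face has a vertex in its closure.

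The main obstacle is the section-ring identification in (a) $\Rightarrow$ (b): this combines the general projective-geometric fact that the section ring of a very ample line bundle on a normal variety is the integral closure of the homogeneous coordinate ring (via Serre vanishing and saturation with respect to the irrelevant ideal) with the classical toric-monoid computation of the integral closure of $\kk[\cA]$. Once the ``height'' coordinate in $\ZZ^{d+1}$ is properly in hand, the remaining steps are elementary.
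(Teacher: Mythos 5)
Your argument is correct, but it takes a genuinely different route from the paper's. The paper is organized hub-and-spoke: it proves (a)$\Leftrightarrow$(b), (a)$\Leftrightarrow$(c) and (a)$\Leftrightarrow$(e) directly and simply cites Sturmfels' Theorem 13.11 for (a)$\Leftrightarrow$(d). You instead close the cycle (a)$\Rightarrow$(b)$\Rightarrow$(d)$\Rightarrow$(c)$\Rightarrow$(a), which forces you to supply the two implications the paper does not argue: (b)$\Rightarrow$(d) via Serre's comparison $\kk[\cA]_i\cong H^0(X_{\cA},\cL_{\cA}^{\otimes i})$ for $i\gg0$ together with the finiteness of each graded piece of $\overline{M_{\cA}}$, and (d)$\Rightarrow$(c) via the infinitely many points $(\z+h\v,h)\in\overline{M_{\cA}}$ along the ray through a vertex, at least one of which must then lie in $M_{\cA}$ and hence decompose as a sum of $h$ elements of $(\cA,1)$. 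Both steps are sound (for the Serre step one should note that $\kk[\cA]$ is a domain generated in degree one, so the comparison map is an isomorphism in large degrees even though the ring need not be normal), and your (d)$\Rightarrow$(c) argument is an attractive elementary substitute for the black-box citation. The ingredients you share with the paper are the identification of the section ring with the normalization of $\kk[\cA]$ in (a)$\Rightarrow$(b), and the cover of $X_{\cA}$ by the vertex charts $\Spec(\kk[\ZZ_{\ge0}(\cA-\v)])$ in the link between (c) and (a); for the sufficiency of the vertex charts the paper argues algebraically that the chart at a non-vertex point $\x\in\inte(F)$ is a localization of the chart at any vertex of $F$ and hence redundant, while you invoke the orbit--face correspondence for the (possibly non-normal) variety $X_{\cA}$ --- both are valid, the paper's being the more self-contained. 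Net effect: the paper is shorter because it leans on the literature; your version makes every equivalence explicit at the cost of a little extra bookkeeping.
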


\begin{proof} (a)$\Longleftrightarrow$(b) holds because the left-hand side of (b) is the normalization of
$\kk[\cA]$---a general fact for a normal projective variety and a very ample line bundle on it \cite[Ch. II, Exercise 5.14]{Algeo}, while the right-hand side is $\kk[\overline{M_{\cA}}]$.

\medskip \noindent (a)$\Longleftrightarrow$(c) follows from the open affine cover
$$
  X_{\cA}=\bigcup_{\v \in \vertex(\P)} \Spec(\kk[\ZZ_{\ge0}(\cA-\v)])
$$
\cite[Proposition 2.1.9]{TORICvarieties} and the fact that, for every vertex $\v\in \P$, the normalization of
$\kk[\ZZ_{\ge0}(\cA-\v)]$ equals $\kk[\RR_{\ge0}(\P-\v)\cap\L(A)]$ \cite[Section 4.E]{Kripo}.
The aforementioned affine cover, in turn, follows from the standard
dehomogenization with respect to the degree-1 generating set $(\cA,1)\subset\kk[\cA]$ and the observation that
the affine charts $\Spec(\kk[\ZZ_{\ge0}(\cA-\x)])$ with $\x\in\cA\setminus\vertex(\P)$ are redundant: if $\x\in\inte(F)$ for a positive-dimensional face $F\subset \P$, then we have
\begin{align*}
\kk[\ZZ_{\ge0}(\cA-\x)]=\kk[\ZZ_{\ge0}(\cA-\z)-\ZZ_{\ge0}((\cA\cap F)-\z)] \, ,
\end{align*}
where $\z$ is any vertex of $F$ and the right-hand side is the localization with respect to the monomial multiplicative subset
$\ZZ_{\ge0}((\cA\cap F)-\z)\subset \kk[\ZZ_{\ge0}(\cA-\z)]$.

\medskip \noindent (a)$\Longleftrightarrow(d)$ This is Theorem 13.11 in \cite{STURMpol}.

\medskip \noindent (a)$\Longleftrightarrow$(e) This is, essentially, a matter of convention: some sources (e.g., \cite{Kripo,Toric}) include the normality in the
definition of a toric variety, whereas the recent comprehensive reference in the field \cite{TORICvarieties} relaxes this assumption.
\end{proof}

\subsection{Very ample polytopes}\label{VeryAmplePolytopes}

In view of Proposition \ref{veryample}, it is natural to call a point configuration $\cA\subset\ZZ^d$ \Def{very
ample} if it satisfies the equivalent conditions in the proposition. If $\cA$ is very ample, the elements of $\overline{M_{\cA}}\setminus M_{\cA}$ will be called \Def{gaps}, and the maximal possible degree of a gap will be denoted by $\gamma(\cA)$, i.e.,
$$
\gamma(\cA)=
\begin{cases}
&0,\ \text{if}\ (M_{\cA})_i=(\overline{M_{\cA}})_i\ \text{for all}\ i\in\ZZ_{\ge0},\\
&\max\left(i\ |\ (M_{\cA})_i\subsetneq(\overline{M_{\cA}})_i\right)\in\NN,\ \text{otherwise}.
\end{cases}
$$

For $\cA$ very ample, $\gamma(\cA)$ is a higher-dimensional analog of the \emph{Frobenius number of a numerical
monoid} (see, e.g., \cite{DIOPHANTINE}): there are no gaps in the degrees $>\gamma(\cA)$. The analogy is limited though---the monoid $M_{\cA}$ is generated in degree
1, whereas the classical Frobenius number of a numerical monoid $M\subset\ZZ_{\ge0}$ is not defined exactly in the situation when $M$ is generated in degree 1, i.e., when $M=\ZZ_{\ge0}$.

Since $\gamma(\cA)$ depends only on the monoid $M_{\cA}$ and not on how $\cA$ sits in $\ZZ^d$, without loss of
generality we can assume $\L(\cA)=\ZZ^d$.
This is achieved by changing the original ambient lattice $\ZZ^d$ to $\L(\cA)$. The upshot of this assumption is that the Hilbert function of $\kk[\overline{M_{\cA}}]$ is now the \Def{Ehrhart polynomial} of $\P=\conv(\cA)$, i.e.,
$$
\dim_\kk(\kk[\overline{M_{\cA}}])_j)= \# \left( j\P \cap \ZZ^d \right) =: \ehr_\P(j),\qquad j\in\NN.
$$

Next we observe that $\gamma(\cA)$ can be made arbitrarily large by varying $\cA$ without changing $X_{\cA}$. In
fact, for the `rarified' very ample configurations
$$
\cA_c=\bigcup_{\v \in \vertex(\P)}((c-1)\v+\cA),\quad c\in\NN,
$$
we have
\begin{align*}
&\conv(\cA_c)=c\cdot\conv(\cA)\, , \\
&X_{\cA_c}\cong X_{\cA},\quad c\in\NN \, , \\
&\gamma(\cA_c)\to\infty\ \ \text{as}\ \ c\to \infty \, .
\end{align*}
These observations explain why one needs to restrict to very ample polytopes in the quest for upper bounds for $\gamma(\cA)$:
a \Def{very ample} polytope is a lattice polytope $\P\subset\RR^d$ such that the point configuration
$\P\cap\ZZ^d\subset\ZZ^d$ is very ample and $\L(\P)$ is a direct summand of $\ZZ^d$.

For a very ample polytope $\P\subset\RR^d$ we denote $\gamma(\P)=\gamma(\P\cap\ZZ^d)$. The number $\gamma(\P)$ measures how far the embedding
$X_{\P\cap\ZZ^d}\hookrightarrow\PP^{N-1}_\kk$, where again $N=\#(\P\cap\ZZ^d)$, is from being projectively normal. Alternatively, the
number $\gamma(\P)$ can be defined as the maximal degree beyond which the Hilbert function of $\kk[\P\cap\ZZ^d]$ equals
its Hilbert polynomial.

For a lattice polytope $\P\subset\RR^d$ the smallest generating set of the normalization
$\overline{M_{\P\cap\ZZ^d}}$ is called the \Def{Hilbert basis}. It is
concentrated in degrees $<\dim(\P)$ \cite[Theorem 2.52]{Kripo}. For $\P$ very ample, the elements of the Hilbert basis of $\overline{M_{\P\cap\ZZ^d}}$
represent gaps in $\overline{M_{\P\cap\ZZ^d}}$. One might expect that, similarly, there is a dimensionally uniform upper bound for the degrees of \emph{all} gaps.
However, we show in Section \ref{Class} that this is false already in dimension three, even for very ample lattice polytopes with eight lattice points (see
Theorem \ref{highgaps} below).

Our extremal examples suggest that it might be of interest to study the \Def{gap vector}
$\gv(\P)$ of a very ample polytope $\P$, with entries
\[
  \gv_k(\P) := \# \text{ gaps in $M_\P$ at height $k$,}
\]
stopping at the largest height $\gamma(\P)$ that contains gaps in $M_\P$.
As an indication that this might be an interesting concept, we offer a conjecture on unimodality of gap vectors (see
Conjecture \ref{gapvectorconj} below) and verify it for the gap vectors for a
family of polytopes that play a central role in Section~\ref{Class}.

In the proof of Proposition \ref{veryample} we described the monomial affine charts of $\Proj(\kk[\cA])$. This description implies
that, for a lattice polytope $\P\subset\RR^d$ with $\L(\P)$ a direct summand of $\ZZ^d$, the variety $\Proj(\kk[M_\P])$
is smooth if and only if the primitive edge vectors at every vertex of $\P$ define a part of a basis of $\ZZ^d$ \cite[Exercise 4.25]{Kripo}.
Correspondingly, one calls such polytopes \Def{smooth}. Clearly, smooth polytopes are very ample. Much effort went into the study whether $\gamma(\P)=0$ for a
smooth polytope $\P$, i.e., whether smooth polytopes are integrally closed. This is the well-known \Def{Oda question}, still wide open, even in dimension three \cite{mfo}.

\subsection{Koszul polytopes}\label{KoszulPolytopes}

Let $\kk$ be a field. A finitely generated graded $\kk$-algebra $\Lambda=\kk\oplus\Lambda_1\oplus\cdots$ is \Def{Koszul} if the minimal free graded resolution of $\kk$ over $\Lambda$ is linear:
\[
\cdots\mathrel{\mathop{\longrightarrow}\Lambda^{\beta_2}}
\mathrel{\mathop{\longrightarrow}^{\partial_2}}\Lambda^{\beta_1}\mathrel{\mathop{\longrightarrow}^{\partial_1}}\Lambda\mathrel{\mathop{\longrightarrow}^{\partial_0}}\kk
\mathrel{\mathop{\longrightarrow}}0,\quad\deg(\partial_i)=1,\quad i>0.
\]
The condition $\deg(\partial_1)=1$ is equivalent to $\Lambda$ being homogeneous and the condition $\deg(\partial_1)=\deg(\partial_2)=1$
is equivalent to $\Lambda$ being \Def{quadratically defined}, i.e.,
\begin{align*}
\Lambda=\kk[X_1,\ldots,X_N]/(F_1,\ldots,F_n),\quad N=\dim_\kk\Lambda_1,
\end{align*}
for some homogeneous quadratic polynomials $F_1,\ldots,F_n$.

When $\Lambda$ is a quadratically defined graded monoid algebra $\kk[M]$, then the polynomials
$F_1,\ldots,F_n$ can be chosen to be of the form $m-m'$ for some degree-2 monomials $m,m'\in\kk[X_1,\ldots,X_N]$; see \cite[Sections 4.A,B,C]{Kripo} for generalities on monoid algebras.

A well-known sufficient (but in general not necessary) criterion for the Koszul property, already detected in
Priddy's pioneering work on Koszul algebras \cite{Priddy}, is the existence of a quadratic Gr\"obner basis; for a proof using Gr\"obner-bases terminology see, e.g., \cite{Vetter}. Namely, a $\kk$-algebra
$\Lambda=\kk[X_1,\ldots,X_N]/I$, where $I$ is a homogeneous ideal, is Koszul if $I$ admits a
quadratic Gr\"obner basis with respect to some term order on $\kk[X_1,\ldots,X_N]$.

Oda's question, mentioned above, corresponds to the degree-$1$ part of \Def{B{\o}gvad's conjecture}, which claims that for every
smooth lattice polytope $\P\subset\RR^d$, the algebra $\kk[\RR_{\ge0}(\P,1)\cap\ZZ^{d+1}]$ is Koszul.

We call a lattice polytope $\P$ \Def{quadratically defined} or \Def{Koszul} if the graded monoid algebra
$\kk[\P\cap\ZZ^d]$ is quadratically defined or Koszul, respectively, for every field $\kk$. The property of being quadratically defined is independent of $\kk$,
but whether $\kk[\P\cap\ZZ^d]$ being Koszul depends on $\kk$ is an open question \cite[Question 8.5.6]{Peeva}. In
particular, if Oda's question has a positive answer, then B{\o}gvad's conjecture is equivalent to the claim that smooth polytopes are Koszul.

Examples of Koszul polytopes (or point configurations) include:
\begin{enumerate}[{\rm$\bullet$}]
\item The dilated lattice polytopes $c \, \P$ for $c\ge\dim \P$ \cite[Theorem 1.3.3]{BrGuTr}; for
sharper lower bounds for $c$, depending on $\P$, see \cite[Section 4]{Hering}.
\item Lattice polytopes cut out by root systems of classical type and their Cayley sums \cite{PAYNEroot}; type $A$ was considered before in \cite[Theorem 2.3.10]{BrGuTr}, using different methods; see Example \ref{Example2} below.
\item The non-polytopal point configuration $\cA=\conv(0,3\e_1,3\e_2,3\e_3)\setminus\{(1,1,1)\}$ \cite{Caviglia}.
\end{enumerate}

Recently, Oda's question and B{\o}gvad's conjecture (and extensions to more general point configurations)  have been
attracting considerable interest in the community of algebraic combinatorics \cite{aim,mfo}. For an effective approach to a potential counterexample
to B{\o}gvad's conjecture, see~\cite{BRUNSquest}. The surveys
\cite{CONCA,Froberg,Peeva} include much relevant general background material.

In Section \ref{Koszul}, we derive a new large
class of Koszul polytopes in arbitrary dimensions. In particular, when our examples of very ample 3-polytopes in
Section \ref{Class} below happen to be smooth, then they are normal and Koszul as well.

Note that if $\P$ is integrally closed (resp.\ normal, very ample, Koszul) then so are the faces of $\P$; for the Koszul property one uses \cite[Proposition 1.4]{Ohsugi}.
One can also show that if $\P$ and $\Q$ are integrally closed (resp.\ normal, very ample, Koszul) then so is $\P\times\Q$; the ring $\kk[M_{\P\times\Q}]$ is the \Def{Segre product} of $\kk[\P]$ and $\kk[Q]$ and the Koszul property transfers from factor algebras to their Segre product \cite[Theorem 2(ii)]{Froberg}. In particular, since there are 3-dimensional non-normal very ample polytopes (see Section \ref{Class}), the direct product with the unit segment $[0,1]$ yields the existence of non-normal very ample polytopes in all dimensions $d\ge3$. Classically, all lattice $d$-polytopes
with $d\le2$ are integrally closed (see, e.g., \cite[Proposition 1.2.4]{BrGuTr}). Moreover, by \cite[Corollary 3.2.5]{BrGuTr} a lattice polygon is Koszul if and only if either it is a unimodular triangle or it has at least $4$ lattice points in the boundary.

\section{Very ample 3-polytopes with gaps of arbitrarily large degrees}\label{Class}

The polytopal construction announced in the introduction and central to this paper is as follows:

\begin{definition}\label{fibration}
An affine map $f:\P\to \Q$ between lattice polytopes $\P\subset\ZZ^{d_1}$ and $\Q\subset\ZZ^{d_2}$ is a \Def{lattice segmental fibration} if it satisfies the following conditions:
\begin{enumerate}[{\rm (i)}]
\item $f^{-1}(\x)$ is a lattice segment, i.e., a one-dimensional lattice polytope or a lattice point, for every $\x\in \Q\cap\ZZ^{d_2}$,
\item $\dim(f^{-1}(\x))=1$ for at least one $\x\in \Q\cap\ZZ^{d_2}$,
\item $\P\cap\ZZ^{d_1}\subset\bigcup_{\Q\cap\ZZ^{d_2}}f^{-1}(\x)$.
\end{enumerate}
\end{definition}
%
% \vspace{-.2in}
%\newpage
\begin{figure}[h!]
\caption{A lattice segmental fibration.}
\vspace{.2in}
\includegraphics[trim = 0mm 1.25in 0mm 1.5in, clip, scale=.2]{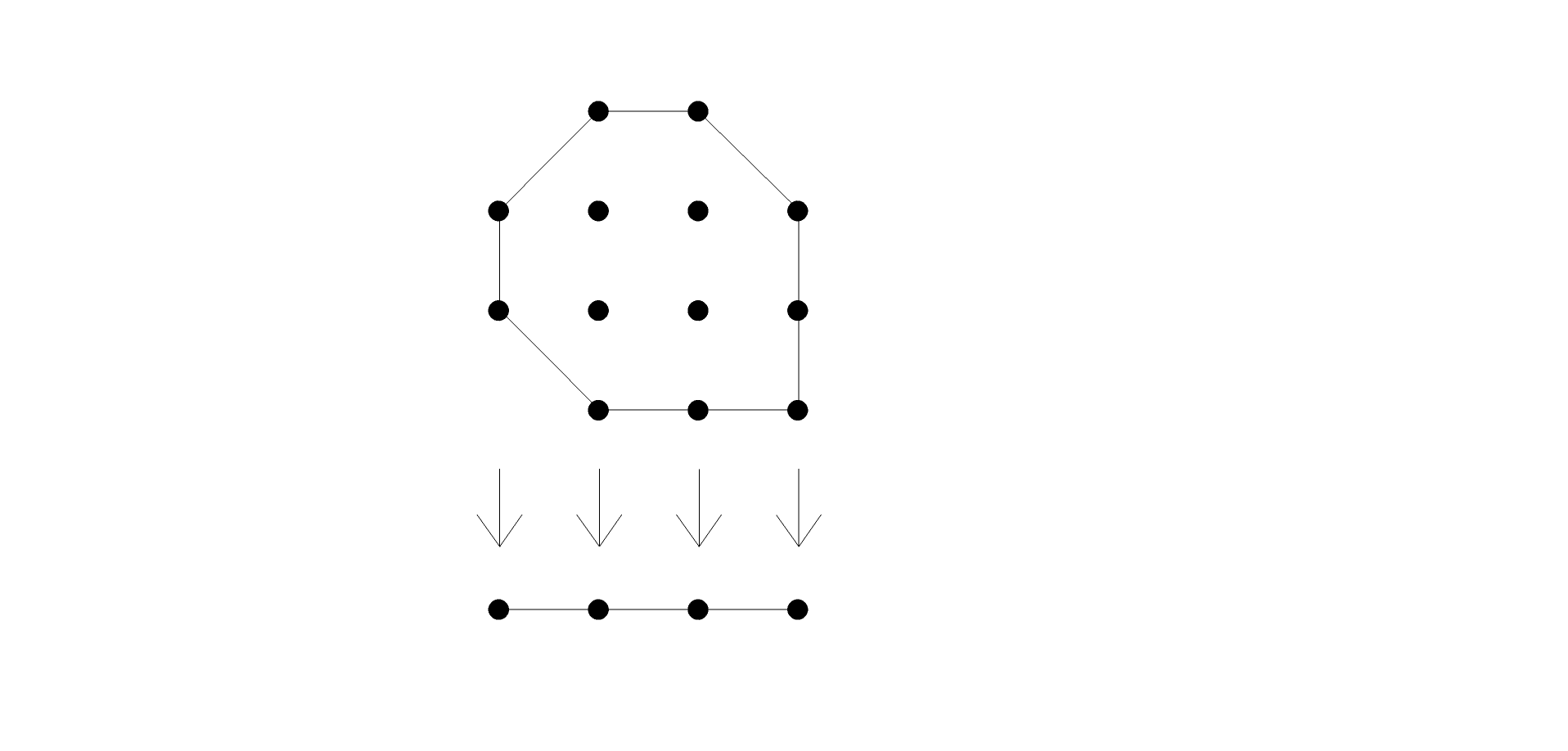}
\end{figure}

It follows from this definition that a lattice segmental fibration $f:\P\to\Q$ is a surjective map and we have the isomorphism of groups $\L(\P)\cong\L(\Q)\oplus\ZZ$.

In this section, using certain small lattice segmental fibrations, we show that there is no uniform upper bound for $\gamma(\P)$ even for 3-dimensional very ample polytopes with a few lattice points.

The following class of 3-polytopes was introduced in \cite[Exercise 2.24]{Kripo}. The first explicit representatives of the class showed up already in Bruns' report
\cite[p.~2290]{mfo}. Let
$I_k=[a_k,b_k]\subset\RR$ be lattice segments for $1\le k\le 4$, none of them degenerated to a point. Let
\[
  \P(I_1,I_2,I_3,I_4):=\conv\big((0,0,I_1),(1,0,I_2),(0,1,I_3),(1,1,I_4)\big)\subset\RR^3 .
\]
Thus the map
\begin{align*}
\P(I_1,I_2,I_3,I_4)\to\conv\big((0,0),(1,0),(0,1),(1,1)\big),\quad (x,y,z)\mapsto(x,y),
\end{align*}
is a lattice segmental fibration.
\begin{lemma}\label{Kripoexercise}
\begin{enumerate}[{\rm (a)}]
\item $\P(I_1,I_2,I_3,I_4)$ is very ample \cite[Exercise 2.24]{Kripo}.
\item  $\P(I_1,I_2,I_3,I_4)$ is smooth if and only if $a_1+a_4=a_2+a_3$ and $b_1+b_4=b_2+b_3$.
\end{enumerate}
\end{lemma}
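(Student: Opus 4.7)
The plan is a vertex-by-vertex verification, using Proposition~\ref{veryample}(c) for part (a) and the standard ``primitive edges form a $\ZZ^d$-basis'' criterion for smoothness in part (b). Non-degeneracy of each $I_k$ implies $\L(\P\cap\ZZ^3)=\ZZ^3$, so the direct-summand requirement in the definition of a very ample polytope is automatic. By the symmetries of the construction it suffices to work at a single bottom vertex $\v_1:=(0,0,a_1)$; the remaining bottom vertices follow by relabeling and the four top vertices by the dual argument with $b_k$ replacing $a_k$. Setting $\alpha_k:=a_k-a_1$, the combinatorial structure of the lower envelope of $\P$ above $[0,1]^2$ is controlled by the sign of $a_1+a_4-a_2-a_3$: the envelope is a single quadrilateral when this quantity is zero, and splits into two triangles meeting along the diagonal $\v_1\v_4$ (when negative) or $\v_2\v_3$ (when positive).

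For part (a), in the range $a_1+a_4\ge a_2+a_3$ the tangent cone $C_{\v_1}:=\RR_{\ge0}(\P-\v_1)$ is simplicial with primitive edge vectors $(0,0,1)$, $(1,0,\alpha_2)$, $(0,1,\alpha_3)$ whose $3\times3$ determinant equals $1$. Hence $C_{\v_1}$ is unimodular, $C_{\v_1}\cap\ZZ^3$ is the $\NN$-span of the three edges, and each of these lies in $(\P\cap\ZZ^3)-\v_1$ (the first because $I_1$ is non-degenerate, the other two as $\v_2-\v_1$ and $\v_3-\v_1$). If instead $a_1+a_4<a_2+a_3$, then $C_{\v_1}$ acquires a fourth edge $(1,1,\alpha_4)=\v_4-\v_1$, and I would split it along this diagonal ray into the two simplicial sub-cones
\[
\RR_{\ge0}\{(0,0,1),(1,0,\alpha_2),(1,1,\alpha_4)\}\quad\text{and}\quad\RR_{\ge0}\{(0,0,1),(0,1,\alpha_3),(1,1,\alpha_4)\},
\]
each of determinant $\pm1$ (a cofactor expansion along the first column) and therefore unimodular, with all four generators still sitting in $(\P\cap\ZZ^3)-\v_1$. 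In either regime $C_{\v_1}\cap\ZZ^3\subset\ZZ_{\ge0}((\P\cap\ZZ^3)-\v_1)$, settling very ampleness at $\v_1$.

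For part (b), the same case analysis shows that $\v_1$ and $\v_4$ are trivalent iff the ridge $\v_1\v_4$ is absent from the lower envelope, i.e.\ iff $a_1+a_4\le a_2+a_3$, and symmetrically $\v_2,\v_3$ are trivalent iff $a_1+a_4\ge a_2+a_3$. All four bottom vertices are simultaneously trivalent only when $a_1+a_4=a_2+a_3$, and the dual analysis at the top forces $b_1+b_4=b_2+b_3$. Under both equalities the determinant computations from part (a) already certify that the three primitive edge vectors at every vertex form a $\ZZ^3$-basis, so $\P$ is smooth; conversely, failure of either equality produces a four-valent vertex, which rules out smoothness.

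The main obstacle is the non-simplicial four-edge case in part (a): one must decompose $C_{\v_1}$ into simplicial unimodular pieces whose generators are \emph{genuine} edge rays of $\P$, with no auxiliary interior rays allowed. The split along $(1,1,\alpha_4)$ is forced by the combinatorics of the lower envelope, and the unimodularity of both resulting sub-cones reduces to the $2\times2$ unimodularity of the main-diagonal triangulation of the unit square.
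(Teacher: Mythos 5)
Your argument is correct and follows essentially the same route as the paper: normalize at one bottom vertex, split its tangent cone into unimodular simplicial subcones according to the sign of $a_1+a_4-a_2-a_3$ (with the same three determinant computations), and deduce smoothness from simplicity via coplanarity of the bottom and of the top vertex quadruples. One minor slip in part (b): the two valence criteria are stated with the inequalities interchanged --- your own case analysis in part (a) shows $\v_1,\v_4$ are trivalent iff $a_1+a_4\ge a_2+a_3$ and $\v_2,\v_3$ iff $a_1+a_4\le a_2+a_3$, not the other way around --- but since only their conjunction $a_1+a_4=a_2+a_3$ is used, the conclusion is unaffected.
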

\begin{proof}[Proof of Lemma \ref{Kripoexercise}] (a)
Acting by translations and lattice automorphisms we can assume $I_1=[0,b_1]$ and we only need to check that
\begin{equation}\label{veryampleequality}
C\cap\ZZ^3=\ZZ_{\ge0}(1,0,a_2)+\ZZ_{\ge0}(0,1,a_3)+\ZZ_{\ge0}(1,1,a_4)+\ZZ_{\ge0}\e_3 \, ,
\end{equation}
where $C=\RR_{\ge0}\P(I_1,I_2,I_3,I_4)$ and $\e_3=(0,0,1)$.
There are two possibilities: either
\begin{align*}
&C=\RR_{\ge0}\e_3+\RR_{\ge0}(1,0,a_2)+\RR_{\ge0}(0,1,a_3),\ \text{or}\\
&C=\big(\RR_{\ge0}\e_3+\RR_{\ge0}(1,0,a_2)+\RR_{\ge0}(1,1,a_4)\big)\ \cup\ \big(\RR_{\ge0}\e_3+\RR_{\ge0}(0,1,a_3)+\RR_{\ge0}(1,1,a_4)\big).
\end{align*}
In the first case, (\ref{veryampleequality}) holds because $\{\e_3,(1,0,a_2),(0,1,a_3)\}$ is a basis of $\ZZ^3$:
\[
\det\left[ \begin{array}{ccc}
1&0&a_2\\
0&1&a_3\\
0&0&1
  \end{array} \right]=1.
\]
In the second case, (\ref{veryampleequality}) holds because the two cones on the right-hand side are spanned by bases of~$\ZZ^3$:
\[
\det\left[ \begin{array}{ccc}
1&0&a_2\\
1&1&a_4\\
0&0&1
  \end{array} \right]=1,
\qquad
\det\left[ \begin{array}{ccc}
0&1&a_3\\
1&1&a_4\\
0&0&1
  \end{array} \right]=-1,
\]
and therefore
\[
C\cup\ZZ^d=\big(\ZZ_{\ge0}(1,0,a_2)+\ZZ_{\ge0}(1,1,a_4)+\ZZ_{\ge0}\e_3\big)\, \cup\, \big(\ZZ_{\ge0}(0,1,a_3)+\ZZ_{\ge0}(1,1,a_4)+\ZZ_{\ge0}\e_3\big).
\]

\medskip \noindent (b) The argument in part (a) shows that if a vertex of $\P(I_1,I_2,I_3,I_4)$ is simple then the primitive edge vectors at this vertex form a basis of $\ZZ^3$. So the polytope $\P(I_1,I_2,I_3,I_4)$ is smooth if and only if it is simple. On the other hand, $\P(I_1,I_2,I_3,I_4)$ being simple means the `bottom' vertices
$(0,0,a_1)$, $(1,0,a_2)$, $(0,1,a_3)$, $(1,1,a_4)$ align in a plane (i.e., they span a facet) and so do the `top' vertices $(0,0,b_1)$, $(1,0,b_2)$, $(0,1,b_3)$, $(1,1,b_4)$. This is equivalent to the desired equalities.
\end{proof}
An extension of (\ref{veryampleequality}) in the proof of Lemma \ref{Kripoexercise}(a) for $d$-dimensional
rational cones $C\subset\RR^d$ with $d+1$ extremal vectors was given in \cite[Proposition 8.1]{BRUNSquest}. The class of smooth polytopes of type $\P(I_1,I_2,I_3,I_4)$ will be extended in Example \ref{Example} below.

We now specialize to the family of polytopes
\begin{align*}
  \P_m &:= \P \left( [0,1], \, [0,1], \, [0,1], \, [m,m+1] \right) .
\end{align*}

\def\JPicScale{.7}
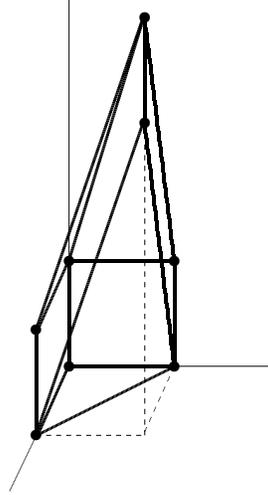
\begin{figure}[h!]
\caption{The polytope $\mathcal{P}_m$.}
\ifx\JPicScale\undefined\def\JPicScale{1}\fi
\unitlength \JPicScale mm
\begin{picture}(51.25,94.38)(0,0)
\linethickness{0.05mm}
\put(12.5,24.38){\line(0,1){70}}
\linethickness{0.1mm}
\multiput(1.25,0.62)(0.12,0.25){94}{\line(0,1){0.25}}
\linethickness{0.05mm}
\put(12.5,24.38){\line(1,0){38.75}}
\linethickness{0.3mm}
\put(6.25,31.25){\circle*{1.88}}

\linethickness{0.3mm}
\put(6.25,11.25){\circle*{1.88}}

\linethickness{0.3mm}
\put(32.5,44.38){\circle*{1.88}}

\linethickness{0.3mm}
\put(32.5,24.38){\circle*{1.88}}

\linethickness{0.3mm}
\put(12.5,44.38){\circle*{1.88}}

\linethickness{0.3mm}
\put(12.5,24.38){\circle*{1.88}}

\linethickness{0.4mm}
\put(12.5,24.38){\line(0,1){20}}
\linethickness{0.4mm}
\put(12.5,24.38){\line(1,0){20}}
\linethickness{0.4mm}
\put(32.5,24.38){\line(0,1){20}}
\linethickness{0.4mm}
\put(12.5,44.38){\line(1,0){20}}
\linethickness{0.4mm}
\put(6.25,11.25){\line(0,1){20}}
\linethickness{0.4mm}
\multiput(6.88,11.88)(0.12,0.27){47}{\line(0,1){0.27}}
\linethickness{0.4mm}
\multiput(6.88,31.88)(0.12,0.27){47}{\line(0,1){0.27}}
\linethickness{0.1mm}
\multiput(26.88,11.88)(0.87,1.92){7}{\multiput(0,0)(0.11,0.24){4}{\line(0,1){0.24}}}
\linethickness{0.05mm}
\multiput(26.88,11.25)(0,2.01){30}{\line(0,1){1.01}}
\linethickness{0.3mm}
\put(26.88,90.62){\circle*{1.88}}

\linethickness{0.3mm}
\put(26.88,70.62){\circle*{1.88}}

\linethickness{0.4mm}
\put(26.88,70.62){\line(0,1){20}}
\linethickness{0.4mm}
\multiput(6.25,11.25)(0.12,0.35){172}{\line(0,1){0.35}}
\linethickness{0.4mm}
\multiput(6.25,11.25)(0.24,0.12){109}{\line(1,0){0.24}}
\linethickness{0.4mm}
\multiput(26.88,70.62)(0.12,-0.98){47}{\line(0,-1){0.98}}
\linethickness{0.4mm}
\multiput(26.88,90.62)(0.12,-0.98){47}{\line(0,-1){0.98}}
\linethickness{0.4mm}
\multiput(12.5,44.38)(0.12,0.39){120}{\line(0,1){0.39}}
\linethickness{0.4mm}
\multiput(6.25,31.25)(0.12,0.35){172}{\line(0,1){0.35}}
\linethickness{0.05mm}
\multiput(6.25,11.25)(1.9,0){11}{\line(1,0){0.95}}
\end{picture}
\end{figure}

\noindent The underlying point configuration, written as a matrix, is
\[
  \P_m\cap\ZZ^3 = \left[ \begin{array}{cccccccc}
%  1 & 1 & 1 & 1 & 1 & 1 & 1 & 1 \\
  0 & 1 & 0 & 0 & 1 & 0 & 1 & 1 \\
  0 & 0 & 1 & 0 & 0 & 1 & 1 & 1 \\
  0 & 0 & 0 & 1 & 1 & 1 & m & m+1
  \end{array} \right] .
\]
The same  family is featured in \cite[Example 15]{8authors} in connection with the (lack of the) Gorenstein property. The monomial realizations of the corresponding monoid rings are
$$
\kk\left[\P_m\cap\ZZ^3\right]\cong\kk[Z,XZ,YZ,WZ,XWZ,YWZ,XYW^mZ,XYW^{m+1}Z] \, .
$$

Recall that the gap vector $\gv(\P)$ of a very ample polytope $\P$ has entries
\[
  \gv_k(\P) := \# \text{ gaps in $M_\P$ at height $k$,}
\]
stopping at the largest height $\gamma(\P)$ that contains gaps in $M_\P$.

We can give an explicit formula for the gap vectors $\gv(\P_m)$ for all $m$.

\begin{theorem}\label{gapvectorofPm}
Let $m\geq 3$.
The gap vector of $\P_m$ has the entries
\[
  \gv_k(\P_m) = {k+1\choose3}(m-k-1).
\]
In particular, $\gamma(\P_m) = m-2$ and
\[
\gv_1(\P_m)\le\cdots\le\gv_j(\P_m)\ge\gv_{j+1}(\P_m)\ge\cdots\ge\gv_{m-2}(\P_m)
\]
for $j=\lceil{\frac{3m-5}4}\rceil$.
\end{theorem}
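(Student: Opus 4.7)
The plan is fiberwise analysis over the projection $\pi\colon\RR^3\to\RR^2$ forgetting the last coordinate. First I describe the upper and lower envelopes of $\P_m$ explicitly. For $m\ge 1$ the four bottom vertices $(0,0,0), (1,0,0), (0,1,0), (1,1,m)$ are not coplanar, and the lower convex hull splits along the diagonal $(1,0,0)$--$(0,1,0)$; a symmetric analysis shows the top splits along the \emph{opposite} diagonal $(0,0,1)$--$(1,1,m+1)$. This yields
\[
z_{\min}(x,y)=\max\{0,\,m(x+y-1)\},\qquad z_{\max}(x,y)=1+m\min\{x,y\},
\]
so after scaling, the fiber of $k\P_m$ over a lattice point $(x,y)\in[0,k]^2\cap\ZZ^2$ is the integer interval $[\max\{0,m(x+y-k)\},\,k+m\min\{x,y\}]$. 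Next I parametrize $(M_{\P_m})_k$: writing a height-$k$ element as an $\NN$-combination of the eight generators with total multiplicity $k$ and collecting the total number $d$ of $(1,1,*)$-generators used and the total number $e$ of ``top of fiber'' generators used, one checks that at fixed projection $(x,y)$ the achievable $z$-values are
\[
\bigcup_{d=d_{\min}}^{d_{\max}}\bigl([md,\,md+k]\cap\ZZ\bigr),
\]
with $d_{\min}=\max\{0,x+y-k\}$, $d_{\max}=\min\{x,y\}$, and with $e$ free in $[0,k]$ for each admissible $d$.

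When $m\le k+1$, consecutive slabs overlap or abut, so their union equals the polytope fiber and there are no gaps at height $k$; this gives $\gamma(\P_m)\le m-2$. When $m\ge k+2$, consecutive slabs leave exactly $m-k-1$ missing integers each, and there are $d_{\max}-d_{\min}=\min\{x,y,k-x,k-y\}$ such gap-stretches per fiber (using the identity $\min(x,y)-\max(0,x+y-k)=\min\{x,y,k-x,k-y\}$). Summing,
\[
\gv_k(\P_m)=(m-k-1)\sum_{(x,y)\in[0,k]^2\cap\ZZ^2}\min\{x,y,k-x,k-y\}.
\]
To evaluate the sum, I count triples $(x,y,d)\in\ZZ^3$ with $d_{\min}(x,y)\le d\le d_{\max}(x,y)-1$: substituting $x'=x-d-1$, $y'=y-d-1$ bijects onto triples $(x',y',d)$ with $x',y'\ge 0$ and $x'+y'\le k-d-2$, totalling $\sum_{d=0}^{k-2}\binom{k-d}{2}=\binom{k+1}{3}$ by the hockey-stick identity. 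This yields $\gv_k(\P_m)=\binom{k+1}{3}(m-k-1)$; since $\gv_{m-2}(\P_m)=\binom{m-1}{3}>0$ for $m\ge 4$, this also confirms $\gamma(\P_m)=m-2$.

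For unimodality, the ratio $\gv_{k+1}(\P_m)/\gv_k(\P_m) = (k+2)(m-k-2)/[(k-1)(m-k-1)]$ simplifies to $\ge 1$ iff $3m-4k-5\ge 0$, i.e., $k\le(3m-5)/4$; hence $\gv_k$ is non-decreasing on $[1,\lceil(3m-5)/4\rceil]$ and non-increasing thereafter. The main technical obstacle is identifying the correct split diagonals for the two envelopes (they run in opposite directions, easy to get wrong) and then making the monoid parametrization clean enough to yield the slab description; once the envelope formulas and the slab union are in place, the counting and the ratio test are elementary.
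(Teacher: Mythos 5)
Your proof is correct, but it reaches the formula by a genuinely different route than the paper's. The paper computes two global quantities and subtracts: the Ehrhart polynomial $\ehr_{\P_m}(j)=(\tfrac m6+1)j^3+3j^2+(3-\tfrac m6)j+1$ (Lemma \ref{highgaps}, via volume and facet considerations) and the Hilbert function $\#(M_{\P_m})_j=(j+1)\binom{j+3}{3}$ for $1\le j\le m-1$ (via essentially your parametrization by the letter-multiplicities $(a,b,c,d)$ together with the number of subscript-$1$ generators --- a bijection precisely because the slabs $[md,md+j]$ are pairwise disjoint when $j\le m-1$); it then invokes Lemma \ref{dimensionstop} to rule out gaps in degrees $\ge m$. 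You instead localize everything to a single fiber over $(x,y)$: the integer points of the fiber of $k\P_m$ form the interval $[\max\{0,m(x+y-k)\},\,k+m\min\{x,y\}]$, the monoid points form the union of slabs $[md,md+k]$ for $d_{\min}\le d\le d_{\max}$, and the gaps are exactly the $(m-k-1)$-point stretches between consecutive slabs. Your envelope formulas (including the two opposite split diagonals), the identity $\min\{x,y\}-\max\{0,x+y-k\}=\min\{x,y,k-x,k-y\}$, the count $\sum\min\{x,y,k-x,k-y\}=\binom{k+1}{3}$ via the shift $(x,y)\mapsto(x-d-1,y-d-1)$ and the hockey-stick identity, and the ratio test all check out. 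What your route buys: a direct identification of where the gaps sit, a uniform treatment of all heights $k$ with no appeal to the Ehrhart polynomial or to the $M_\P$-module generation bound, and the correct caveat that $\gv_{m-2}(\P_m)=\binom{m-1}{3}>0$ requires $m\ge4$ (for $m=3$ the polytope is gap-free, so $\gamma(\P_3)=0$, a point the theorem statement itself glosses over). Two harmless loose ends: your ratio test at $k=1$ divides by $\gv_1=0$ (but $0\le\gv_2$ trivially), and one should note that $z_{\min},z_{\max}$ take integer values at lattice points so the fiber's lattice points really are the full integer interval.
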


Since for $\P$ very ample, $\P\times[0,1]$ is also very ample, the polytopes $\P_m$ imply the existence of non-normal very ample polytopes in all dimensions $\ge3$ with an arbitrarily large number of degree-2 gaps. Similar topics are discussed in \cite{Akihiro}.
In addition, \cite{katthan} gives examples of very ample 3-polytopes $\P$ with arbitrarily deep
gaps, measured via lattice distance relative to the facets of the cone $\RR_{\ge 0}M_{\P}$. However, the gaps in all examples constructed in \cite{Akihiro,katthan} are concentrated in degree~2.

For the proof of Theorem \ref{gapvectorofPm} we will need several auxiliary results.

\begin{lemma}\label{highgaps}
The Ehrhart polynomial of the polytope $P_m$ equals
$$\ehr_{ \P_m } (j) = \left( \tfrac m 6 + 1 \right) j^3 + 3 \, j^2 + \left( 3 - \tfrac m 6 \right)  j + 1 \, .$$
\end{lemma}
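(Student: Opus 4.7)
The plan is to count $\# (j\P_m\cap\ZZ^3)$ directly, using the segmental fibration $\P_m\to[0,1]^2$ to reduce the count to a sum over the lattice points in $[0,j]^2$.

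First I would identify the lower and upper envelopes of $\P_m$ over $[0,1]^2$. Because the four ``bottom'' vertices $(0,0,0),(1,0,0),(0,1,0),(1,1,m)$ are not coplanar (for $m\ge1$), the lower boundary is triangulated. Testing the candidate supporting hyperplanes shows that the two lower facets are $\{(0,0,0),(1,0,0),(0,1,0)\}$ lying in $z=0$, and $\{(1,0,0),(0,1,0),(1,1,m)\}$ lying in $z=m(x+y-1)$. Similarly the upper boundary splits along the opposite diagonal into the triangles $\{(0,0,1),(1,0,1),(1,1,m+1)\}$ in $z=1+my$ and $\{(0,0,1),(0,1,1),(1,1,m+1)\}$ in $z=1+mx$. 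Hence over $(x,y)\in[0,1]^2$
\[
z_{\min}(x,y)=m\max(0,x+y-1),\qquad z_{\max}(x,y)=1+m\min(x,y).
\]

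Scaling by $j$, for each $(a,b)\in[0,j]^2\cap\ZZ^2$ the fiber of $j\P_m$ over $(a,b)$ is the interval $[\,m\max(0,a+b-j),\,j+m\min(a,b)\,]$, contributing $j+1+m\min(a,b)-m\max(0,a+b-j)$ lattice points. Summing over $(a,b)$,
\[
\ehr_{\P_m}(j)=(j+1)^3+m\bigl(S_4(j)-S_3(j)\bigr),
\]
where $S_4(j)=\sum_{a,b=0}^{j}\min(a,b)$ and $S_3(j)=\sum_{a,b=0}^{j}\max(0,a+b-j)$.

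The two sums are standard. Using $\min(a,b)=a+b-\max(a,b)$ and $\sum_{a,b=0}^{j}\max(a,b)=\sum_{k=0}^{j}k(2k+1)$ one gets $S_4(j)=\tfrac{j(j+1)(2j+1)}{6}$. For $S_3$, grouping by $s=a+b$ and substituting $t=s-j$ yields $S_3(j)=\sum_{t=0}^{j}t(j+1-t)=\tfrac{j(j+1)(j+2)}{6}$. Therefore
\[
S_4(j)-S_3(j)=\frac{j(j+1)}{6}\bigl((2j+1)-(j+2)\bigr)=\frac{(j-1)j(j+1)}{6},
\]
and plugging back gives $\ehr_{\P_m}(j)=(j+1)^3+\tfrac{m}{6}(j^3-j)=(\tfrac{m}{6}+1)j^3+3j^2+(3-\tfrac{m}{6})j+1$, as asserted.

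The only nontrivial step is the envelope identification in the first paragraph; once the triangulations of the top and bottom are pinned down, everything that follows is a routine summation. I would write out the hyperplane checks explicitly, since they determine the correct diagonals (and, incidentally, they are already half-contained in the proof of Lemma \ref{Kripoexercise}).
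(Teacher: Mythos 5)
Your proposal is correct --- I checked the envelope identification and both summations, and the final polynomial matches --- but it takes a genuinely different route from the paper. The paper's proof is a three-line interpolation argument: it invokes the general structure of Ehrhart polynomials of lattice $3$-polytopes (leading coefficient $=$ volume $= \frac m6+1$, constant term $=1$, and the codimension-one coefficient $=3$ computed from the normalized areas of the four unimodular-triangle facets and four unit-square facets, citing \cite[Theorem 5.6]{Discretely}), and then pins down the remaining linear coefficient from the single evaluation $\ehr_{\P_m}(1)=8$. Your argument instead computes $\#(j\P_m\cap\ZZ^3)$ from scratch by fibering over $[0,j]^2$, which is longer but entirely elementary: it does not presuppose Ehrhart--Macdonald theory (indeed it re-proves polynomiality for this family as a byproduct), and it fits the paper's guiding theme of lattice segmental fibrations --- it is essentially the same fiber-counting device the authors later deploy to compute $\#(M_{\P_m})_j$ in the proof of Theorem \ref{gapvectorofPm}. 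The trade-off is that you must justify the facet structure ($z_{\min}=m\max(0,x+y-1)$, $z_{\max}=1+m\min(x,y)$, splitting along opposite diagonals), which you rightly flag as the one step needing explicit hyperplane checks; the paper's route needs no such geometry beyond recognizing the facets as unimodular triangles and unit squares. Either proof is acceptable; yours is more self-contained, the paper's is shorter.
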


\begin{proof}
The volume of $\P_m$ is easily seen to be $\frac m 6 + 1$;
furthermore, $\P_m$ has four unimodular-triangle facets and four square facets that are unimodularly equivalent to
a unit square, and so
$
  \ehr_{ \P_m } (j) = \left( \frac m 6 + 1 \right) j^3 + 3 \, j^2 + c \, j + 1
$ (see, e.g., \cite[Theorem 5.6]{Discretely}).
Since $\ehr_{ \P_m } (1) = 8$, we compute $c = 3 - \frac m 6$. The lemma follows.
\end{proof}

\begin{lemma}\label{dimensionstop}
Let $\P$ be a (not necessarily very ample) lattice polytope of dimension $d$ and $k_0\geq d-1$ be an integer. If $(M_\P)_{k_0}=(\overline{M_\P})_{k_0}$, then $(M_\P)_k=(\overline{M_\P})_k$ for all $k\geq k_0$.
\end{lemma}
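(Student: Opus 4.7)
Plan: I will prove the lemma by induction on $k\geq k_0$, with base case $k=k_0$ being the hypothesis. For the inductive step at $k>k_0$, I would take $z\in(\overline{M_\P})_k$ and exploit the classical fact (\cite[Theorem 2.52]{Kripo}) that the Hilbert basis of $\overline{M_\P}$ lies in degrees at most $d-1\leq k_0$. Writing $z=h_1+\cdots+h_s$ as a sum of Hilbert basis elements, each summand has $\deg h_i\leq d-1\leq k_0<k$, so $s\geq 2$.

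The strategy is to produce a splitting $z=z'+z''$ with $z'\in(\overline{M_\P})_{k_0}$ and $z''\in(M_\P)_{k-k_0}$, because then the hypothesis gives $z'\in(M_\P)_{k_0}$ and hence $z\in M_\P$. To construct the splitting, I would first try the following: choose some summand $h_i$ with $\deg h_i\leq k-k_0$, so that $z-h_i\in(\overline{M_\P})_{k-\deg h_i}$ has degree $\geq k_0$ and lies in $M_\P$ by the inductive hypothesis. Writing $z-h_i=p_1+\cdots+p_{k-\deg h_i}$ as a sum of degree-1 elements of $M_\P$, I would set
\[
z' \,=\, h_i + p_1 + \cdots + p_{k_0-\deg h_i} \in (\overline{M_\P})_{k_0},
\]
which lies in $(M_\P)_{k_0}$ by hypothesis; the remaining pieces form $z''\in(M_\P)_{k-k_0}$, and we are done.

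The main obstacle is the case in which every summand satisfies $\deg h_i>k-k_0$, so the above summand-removal trick does not apply directly; this can only occur when $k$ is close to $k_0$. To handle this, I would argue by contradiction via a minimum-degree gap: suppose some gap exists at a degree exceeding $k_0$, and let $z$ be one of minimal such degree. Then for any splitting of the Hilbert basis decomposition $z=z_A+z_B$, minimality forces $\deg z_A\leq k_0$ or $\deg z_B\leq k_0$, and any piece at degree $>k_0$ lies in $M_\P$ by minimality, while any piece at degree exactly $k_0$ lies in $M_\P$ by hypothesis. A careful case analysis, using that no Hilbert basis element has degree $k_0$ (otherwise it would be both indecomposable in $\overline{M_\P}$ and a sum of $k_0\geq 2$ degree-1 elements by the hypothesis), together with the bound $\deg h_i\leq d-1\leq k_0$ granted by $k_0\geq d-1$, yields the required contradiction and completes the inductive step.

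I expect the delicate bookkeeping in the last paragraph---specifically, checking that the minimality + Hilbert basis degree bound actually forces the decomposition to contain a usable small-degree summand---to be the main technical hurdle in writing out the full proof.
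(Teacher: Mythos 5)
Your reduction to a splitting $z=z'+z''$ with $z'\in(\overline{M_\P})_{k_0}$ and $z''$ a sum of degree-one elements of $M_\P$ is the right idea, and your summand-removal step does work whenever the Hilbert basis decomposition contains a summand of degree at most $k-k_0$. But the case you defer to a ``careful case analysis'' is a genuine gap, not bookkeeping: the ingredients you permit yourself --- the Hilbert basis of $\overline{M_\P}$ lies in degrees at most $d-1$, no Hilbert basis element has degree exactly $k_0$, and minimality of the gap degree --- do not suffice to close it. Concretely, take $d=4$, $k_0=3$, $k=4$ and $z=h_1+h_2$ with $h_1,h_2$ irreducible elements of $\overline{M_\P}$ of degree $2$ that are gaps. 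Every proper sub-sum of this decomposition has degree $2<k_0$, so neither the hypothesis at level $k_0$ nor minimality applies to any piece, and nothing in your list of facts forces $z\in M_\P$. The root of the problem is that ``$\overline{M_\P}$ is generated \emph{as a monoid} in degrees $\le d-1$'' is strictly weaker than what the lemma needs.

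The paper's proof instead invokes the stronger fact that $\overline{M_\P}$ is generated as an $M_\P$-\emph{module} by elements of degree at most $d-1$, i.e., $\overline{M_\P}=\bigcup(\m+M_\P)$ with the union over $\m\in\overline{M_\P}$ of degree at most $d-1$ (see \cite[Corollary 1.3.4]{BrGuTr} and its proof). Thus every $z\in(\overline{M_\P})_k$ with $k\ge k_0$ can be written as $z=\m+w$ where $\deg\m\le d-1\le k_0$ and $w\in M_\P$ is a sum of $k-\deg\m$ degree-one elements; padding $\m$ with $k_0-\deg\m$ of these summands produces an element of $(\overline{M_\P})_{k_0}=(M_\P)_{k_0}$, and the remaining summands stay in $M_\P$. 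This is exactly your padding trick, but it now applies to every $z$ with no induction and no exceptional case. Replacing your appeal to the Hilbert basis degree bound by this module-generation statement repairs the argument.
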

\begin{proof}
The lemma follows from the fact that $\overline{M_\P}$ is generated as an \Def{$M_\P$-module} by elements of degree at most $d-1$, i.e.,
\[
\overline{M_\P} \ =\bigcup_{
\begin{matrix}
\m\in\overline{M_\P}\\
\deg\m\le d-1
\end{matrix}
}(\m+M) \, ;
\]
see \cite[Corollary 1.3.4]{BrGuTr} and its proof.
\end{proof}

\begin{proof}[Proof of Theorem \ref{gapvectorofPm}]
First we prove the following formula for the Hilbert function:
\[
\#\left(M_{\P_m}\right)_j=(j+1){j+3\choose 3},\qquad 1\leq j\leq m-1.
\]
We fix $1\leq j\leq m-1$. Consider a point $S=\sum_{i=1}^j T_i=(e,f,g)\in\ZZ^3$ for some lattice points $T_i\in P_m$. We use the following notation:
$$A_0=(0,0,0),\quad A_1=(0,0,1),\quad B_0=(1,0,0),\quad B_1=(1,0,1)$$
$$C_0=(0,1,0),\quad C_1=(0,1,1),\quad D_0=(1,1,m),\quad D_1=(1,1,m+1).$$
Let $d$ be the number of times the points $D_i$ occur in some decomposition of $S$. We have $dm\leq g\leq dm+j$, so $d=\lfloor \frac{g}{m}\rfloor$. Thus the number
of times the points $B_i$ and $C_i$ occur in the decomposition equals respectively $b:=e- \lfloor \frac{g}{m}\rfloor$ and $c:=f-\lfloor \frac{g}{m}\rfloor$. Hence,
the points $A_i$ must occur $a:=j-b-c-d$ times. In particular, each decomposition of $S$ has the same number of occurrences of the points in each group $A_i$, $B_i$,
$C_i$ and $D_i$. Moreover, the points $A_1,B_1,C_1,D_1$ must occur $(g\mod m)\leq j$ times. The numbers $a,b,c,d$ and $(g\mod m)$ uniquely determine the point $S$.
We obtain a bijection between the points of $(M_{\P_m})_j$ and points of the form $(a,b,c,d,h)\in\ZZ^5$ where $a,b,c,d,h\geq 0, \ a+b+c+d=j, \ h\leq j$.

Thus $\#\left(M_{\P_m}\right)_j$ equals the number of ordered partitions of $j$ into four parts, for the choice of $a,b,c,d$, times $j+1$ for the choice of $(g\mod m)$. This equals $(j+1){j+3\choose 3}$.
So by Lemma \ref{highgaps}, for $j<m$ we obtain
\begin{align*}
\gv_j(\P_m)&=\ehr_{P_m}(j)-\#\left(M_{\P_m}\right)_j\\
&=\left( \tfrac m 6 + 1 \right) j^3 + 3 \, j^2 + \left( 3 - \tfrac m 6 \right)  j + 1-(j+1){j+3\choose 3}\\
&={j+1\choose 3}(m-j-1) \, .
\end{align*}
Now the formula for $\gv_k(\P_m)$ follows by Lemma \ref{dimensionstop} because $\ehr_{P_m}(m-1)-\#\left(M_{\P_m}\right)_{m-1}=0$ and $m\geq 3=\dim(\P_m)$. The rest of Theorem \ref{gapvectorofPm} follows easily from this formula.
\end{proof}

Based on Theorem \ref{gapvectorofPm} and several other gap vectors of very ample polytopes we computed by effective methods, we offer the following conjecture.
\begin{conjecture}\label{gapvectorconj}
The gap vector of any very ample polytope $\P$ that has normal facets is unimodal, i.e., there exists $j$ such that
\[
  \gv_1(\P) \le \gv_2(\P) \le \dots \le \gv_j(\P) \ge \gv_{ j+1 }(\P) \ge \dots \ge \gv_{ \gamma(\P) }(\P) \, .
\]
\end{conjecture}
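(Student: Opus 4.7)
The plan is to reinterpret the gap vector algebraically and then seek a Lefschetz-type mechanism. First, set $N_\P := \kk[\overline{M_\P}]/\kk[M_\P]$, a graded $\kk[M_\P]$-module whose $k$-th graded piece has dimension $\gv_k(\P)$. The normal-facets hypothesis is used at the outset: if $F \subset \P$ is a facet, then its normality forces every lattice point of $kF$ to be a sum of $k$ lattice points of $F$, so such a point lies in $M_\P$ and cannot be a gap. Consequently $N_\P$ is supported strictly on the interior lattice points of the cone $C := \RR_{\ge 0}(\P,1)$, i.e., every gap lies in $\inte(C) \cap \ZZ^{d+1}$.

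Second, invoke the Danilov--Stanley description $\omega_{\kk[\overline{M_\P}]} = \kk[\inte(C) \cap \ZZ^{d+1}]$ of the canonical module of the normalization. The gaps thus form a distinguished subset of a basis of $\omega$, giving the upper bound $\gv_k(\P) \le \ehr_{\inte \P}(k) = (-1)^{\dim \P}\ehr_\P(-k)$ via Ehrhart reciprocity. This frames $N_\P$ as a vector subspace of a well-understood Cohen--Macaulay module, placing the problem in a setting where duality and $h^*$-type techniques are available.

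Third, attempt a weak-Lefschetz argument: for a generic $\ell \in \kk[M_\P]_1$ (i.e., a $\kk$-linear combination of the generators $x^{(\v,1)}$ for $\v \in \P \cap \ZZ^d$), consider multiplication $\ell \colon (N_\P)_k \to (N_\P)_{k+1}$. If these maps have maximal rank for all $k$, unimodality follows automatically. The explicit formula $\gv_k(\P_m) = {k+1 \choose 3}(m-k-1)$ of Theorem~\ref{gapvectorofPm}, together with the combinatorial description of the gaps of $\P_m$ given in its proof (via the parameters $a,b,c,d,h$), provides a concrete testing ground: one can try to realize the predicted unimodality for this family by producing an explicit Lefschetz element and then abstract what makes it work.

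The main obstacle, and presumably the reason the statement is offered as a conjecture, is that $N_\P$ is a quotient only in the category of graded vector spaces, not of rings, so it inherits neither a Cohen--Macaulay structure from $\kk[\overline{M_\P}]$ nor a natural presentation as a $\kk[M_\P]$-module with controllable syzygies. In particular, a naive ``translation'' map $g \mapsto g + (\v,1)$ is not well-defined at the level of gaps, because $g + (\v,1)$ can easily lie in $M_\P$ and thus cease to be a gap. Pinpointing a Lefschetz direction that preserves ``gap-ness'' uniformly across all very ample polytopes with normal facets appears to require genuinely new input, likely combining the boundary rigidity forced by the normal-facets hypothesis with a deformation argument on the inclusion $\kk[M_\P] \hookrightarrow \kk[\overline{M_\P}]$; absent such a tool, the realistic short-term goal is to verify the conjecture for additional structured families (such as lattice segmental fibrations beyond $\P_m$) and to look there for a uniform combinatorial principle.
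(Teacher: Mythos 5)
The statement you are addressing is a \emph{conjecture}: the paper offers no proof of it, only supporting evidence, namely the explicit verification for the family $\P_m$ in Theorem \ref{gapvectorofPm} and unpublished computations. Your proposal, to its credit, does not claim to be a proof either, and your closing paragraph correctly identifies this as an open problem. So there is nothing in the paper to compare your argument against; what I can do is assess the partial claims you make. Your first two steps are sound and in fact align with the authors' stated motivation for the hypothesis: if the facets of $\P$ are normal (equivalently here, integrally closed), then every lattice point of $\partial\bigl(\RR_{\ge0}(\P,1)\bigr)$ at height $k$ is a sum of $k$ height-one lattice points of the corresponding facet, hence lies in $M_\P$; so all gaps are interior to the cone, and Ehrhart reciprocity gives $\gv_k(\P)\le(-1)^{\dim\P}\ehr_\P(-k)$. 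This is exactly the ``boundary rigidity'' the authors invoke when they explain that without normal facets the facet gap vectors can interfere and cause oscillation (a phenomenon they note is now realized explicitly in the literature they cite).

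The genuine flaw is in your third step. For a general graded module $N$ over a standard graded algebra, the statement ``multiplication by a generic linear form $\ell\colon N_k\to N_{k+1}$ has maximal rank for all $k$, therefore the Hilbert function of $N$ is unimodal'' is false: take $N$ with Hilbert function $1,0,1$ and trivial multiplication; every map $N_k\to N_{k+1}$ is the zero map between spaces one of which is zero, hence of maximal rank, yet the Hilbert function is not unimodal. The implication ``maximal rank $\Rightarrow$ unimodal'' requires additional structure (e.g., that $N$ is generated in degree $0$, or is a cyclic standard graded algebra, or has no internal zeros in its Hilbert function), and $N_\P=\kk[\overline{M_\P}]/\kk[M_\P]$ is only a graded vector-space quotient with an induced $\kk[M_\P]$-module structure on nothing in particular --- as you yourself observe in your final paragraph, $\gp$-translation by $(\v,1)$ does not preserve gap-ness, which is precisely the statement that $N_\P$ is \emph{not} naturally a $\kk[M_\P]$-module in the way the Lefschetz framework needs. (Note also that for $\P_m$ one has $\gv_1=0$ while $\gv_2=m-3>0$, so any candidate module realizing $\gv$ as its Hilbert function is not generated in its initial degree; the ``internal zero'' issue is therefore not hypothetical.) So even as a program, step three needs to be replaced by something that either builds a genuine module or algebra whose Hilbert function is $\gv(\P)$ and which provably has the weak Lefschetz property, or by a direct combinatorial argument; your suggested fallback of testing further structured families is exactly where the paper itself stops.
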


%Conjecture \ref{gapvectorconj}(a) for very ample 3-polytopes follows from Lemma \ref{dimensionstop}.

The reason we require that the facets of $\P$ are normal in Conjecture \ref{gapvectorconj} (which in this case is equivalent to the facets of $\P$ being
integrally closed and automatically satisfied for very ample $3$-polytopes) is that if the gap vectors of the facets of $\P$ contribute to $\gv(\P)$ in a nontrivial
way, then---because the former are relatively independent of each other---the resulting interference
can cause oscillation in $\gv(\P)$. Very recently explicit examples of this phenomenon appeared in
\cite{JaMichalVeryAmple}: there exist very ample polytopes with gap vectors having only two nonzero entries at two arbitrary indices.

%*************************************************

\section{Koszul segmental fibrations}\label{Koszul}

For a field $\kk$, a narrower class of Koszul $\kk$-algebras than those admitting quadratic Gr\"obner bases is formed by the homogeneous $\kk$-algebras for which the defining ideal $I$ admits a square-free quadratic Gr\"obner basis. In the special case of algebras of the form $\kk[\cA]$, where $\cA\subset\ZZ^d$ is a point
configuration, the existence of such Gr\"obner bases is a purely combinatorial condition due to Sturmfels (see \cite{STURMpol} or \cite[Sections 7.A,B]{Kripo})---Theorem \ref{Sturmfels} below.

To describe the connection with polytopal combinatorics, we recall the relevant terminology. We refer the reader
to \cite[Sections 1.D,E,F]{Kripo} for background on polytopal complexes, regular subdivisions and triangulations, stars and links in simplicial
complexes, etc.

A polytopal subdivision $\Delta$ of a polytope $\P\subset\RR^d$ is \Def{regular} if there is a convex function
$h:\P\to\RR$ (i.e., $h(\lambda \x+\mu \y)\le \lambda h(\x)+\mu h(\y)$ for all $\x,\y\in \P$ and
$\lambda,\mu\in\RR_{\ge0}$ with $\lambda+\mu=1$) whose domains of linearity are exactly the facets of $\Delta$,
i.e., the maximal faces of $\Delta$ are the maximal subsets of $\P$ on which $h$ restricts to an affine map. We
say that $h$ is a \Def{support function} for~$\Delta$.

A simplicial complex $\Delta$ is \Def{flag} if the minimal non-faces of $\Delta$ are pairs of vertices of $\Delta$.
(A \Def{non-face} of the simplicial complex $\Delta$ with vertex set $V$ is a subset $W\subset V$ with $W\notin\Delta$.)
In general, the \Def{degree} of a simplicial complex $\Delta$ on a vertex set $V$ is
\[
  \deg(\Delta):=\max\big(\#W\ |\ W\ \text{a minimal non-face of}\ \Delta\big) \, ;
\]
see \cite{BrGuTr}. Thus, flag simplicial complexes are exactly the simplicial complexes of degree~2.

A triangulation of a polytope is thought of as the corresponding geometric simplicial complex, as opposed to the underlying abstract simplicial complex, i.e., the elements of the triangulation are simplices in the ambient Euclidean space.

A triangulation $\Delta$ of a lattice polytope $\P\subset\RR^d$ is \Def{unimodular} if the simplices in $\Delta$ are all unimodular.

Observe that, in exploring ring-theoretical properties of $\kk[\cA]$, there is no loss of generality in assuming that
$\L(\cA)=\ZZ^d$: the isomorphism class of $\kk[\cA]$ is independent of the ambient lattice for $\cA$ and,
therefore, it can be chosen to be~$\L(\cA)$.

\begin{theorem}[Sturmfels \cite{STURMpol}]\label{Sturmfels}
For a point configuration $\cA = \{\a_1,\ldots,\a_N\} \subset\ZZ^d$ with $\L(\cA)=\ZZ^d$, the binomial ideal
\begin{align*}
I_{\cA}:=\Ker\left(
  \begin{array}{rcl}
    \kk[X_1,\ldots,X_N] & \to & \kk[\cA] \\
    X_i & \mapsto & \a_i
  \end{array}
  \right)\subset\kk[&X_1,\ldots,X_N]
\end{align*}
admits a square-free quadratic Gr\"obner basis if and only if there is a regular unimodular flag triangulation of
$\conv(\cA)$ with the vertex set $\cA$.
\end{theorem}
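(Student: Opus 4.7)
The plan is to prove the theorem by invoking Sturmfels' fundamental correspondence between initial ideals of toric ideals and regular polyhedral subdivisions, combined with the dictionary entries \emph{square-free} $\leftrightarrow$ \emph{unimodular} and \emph{quadratic generation} $\leftrightarrow$ \emph{flag}.

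First I would attach to every weight vector $\omega = (\omega_1,\ldots,\omega_N) \in \RR^N$ two objects. On the geometric side, the regular polyhedral subdivision $\Delta_\omega$ of $\conv(\cA)$ obtained by projecting the lower envelope of $\conv\{(\a_i,\omega_i) : 1\le i\le N\}\subset\RR^{d+1}$ back to $\RR^d$. On the algebraic side, the initial ideal $\operatorname{in}_\omega(I_\cA)$ generated by the $\omega$-leading forms of elements of $I_\cA$; for $\omega$ sufficiently generic, this is a monomial ideal, and any term order on $\kk[X_1,\ldots,X_N]$ can be realized, on the finitely many monomials appearing in $I_\cA$, by such a weight vector. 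The core technical input, whose proof exploits the binomial nature of $I_\cA$ and lifting of syzygies, is that $\sqrt{\operatorname{in}_\omega(I_\cA)}$ coincides with the Stanley--Reisner ideal of $\Delta_\omega$, and that $\Delta_\omega$ is a unimodular triangulation with vertex set $\cA$ precisely when $\operatorname{in}_\omega(I_\cA)$ is already radical, equivalently square-free.

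For ($\Leftarrow$), given a regular unimodular flag triangulation $\Delta$ of $\conv(\cA)$ with support function $h$, I set $\omega_i := h(\a_i)$ so that $\Delta_\omega = \Delta$. Unimodularity makes $\operatorname{in}_\omega(I_\cA)$ equal to its radical, hence to the Stanley--Reisner ideal of $\Delta$; the flag property forces that Stanley--Reisner ideal to be generated by square-free quadratic monomials. These monomials are the leading terms of the reduced Gr\"obner basis of $I_\cA$ with respect to any term order refining $\omega$, and since $I_\cA$ is binomial that reduced basis consists of square-free quadratic binomials. For ($\Rightarrow$), starting from a square-free quadratic Gr\"obner basis relative to some term order $\prec$, I represent $\prec$ on the monomials appearing in $I_\cA$ by a weight vector $\omega$, so that $\operatorname{in}_\omega(I_\cA) = \operatorname{in}_\prec(I_\cA)$ is a square-free quadratic monomial ideal. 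Square-freeness identifies this ideal with the Stanley--Reisner ideal of $\Delta_\omega$ and forces $\Delta_\omega$ to be a unimodular triangulation with vertex set $\cA$, while degree-$2$ generation forces every minimal non-face of $\Delta_\omega$ to have cardinality $2$, i.e., $\Delta_\omega$ is flag.

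The main obstacle is the Sturmfels correspondence itself, specifically the identification of $\sqrt{\operatorname{in}_\omega(I_\cA)}$ with the Stanley--Reisner ideal of $\Delta_\omega$ and the equivalence between unimodularity of $\Delta_\omega$ and radicality of $\operatorname{in}_\omega(I_\cA)$. This machinery is developed in full in \cite{STURMpol} and reproduced in \cite[Sections 7.A, B]{Kripo}, so the proof essentially amounts to unpacking that correspondence and recording the two dictionary entries above, rather than rebuilding the correspondence from scratch.
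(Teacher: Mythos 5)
The paper does not prove this statement at all --- it is quoted as a theorem of Sturmfels with a pointer to \cite{STURMpol} and \cite[Sections 7.A,B]{Kripo} --- and your sketch is precisely the standard argument from those sources: the regular-subdivision/initial-ideal correspondence $\sqrt{\operatorname{in}_\omega(I_{\cA})}=I_{\Delta_\omega}$, radical/square-free $\leftrightarrow$ unimodular, and quadratic generation $\leftrightarrow$ flag. Your account is correct (including the points that matter: realizing a term order by a weight vector, degree-$2$ generation excluding any variable from the initial ideal so the vertex set is all of $\cA$, and binomiality of the reduced Gr\"obner basis), so it is consistent with, and fills in, the citation the paper relies on.
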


%In particular, for a point configuration $\cA\subset\ZZ^d$ satisfying $\L(\cA)=\ZZ^d$, the existence of a square-free quadratic Gr\"obner basis for $I_{\cA}$ implies the equality $\cA=\conv(\cA)\cap\ZZ^d$ and that the polytope $\conv(A)$ is integrally closed.

By \cite[Ch. III]{Toroidal}, for every lattice polytope $\P$, the dilated polytope $c \, \P$ has a regular unimodular
triangulation for some $c\in\NN$. By \cite[Theorem 1.4.1]{BrGuTr}, for a
lattice polytope $\P\subset\RR^d$, the ideal $I_{c \P\cap\ZZ^d}$ has a quadratic (but possibly not square-free)
Gr\"obner basis whenever $c\ge\dim \P$. Thus, informally speaking, there is no algebraic obstruction to the
existence of regular unimodular flag triangulations of the dilated lattice polytopes $c \, \P$ for $c\ge\dim \P$. However, currently even the existence of dimensionally uniform lower
bounds for the factors $c$ such that the polytopes $c \, \P$ have unimodular triangulations is a major open problem \cite{Santos}.

Theorem \ref{koszulclass} below leads to a large class of polytopes admitting triangulations with all the nice properties. As we explain later on, this theorem could have been included in
\cite{DHZ}---the argument in \cite[Section 4.2]{DHZ}, used there for a rather
special case of \emph{Nakajima polytopes}, works also in the general case. A
related discussion can be found in Haase--Paffenholz's report in \cite{mfo}.

\begin{theorem}\label{koszulclass}
Let $f:\P\to \Q$ be a lattice segmental fibration of lattice polytopes. Assume $\Delta$ is a regular unimodular flag triangulation of $\Q$ such that the image $f(F)$ of every face $F\subset \P$ is a union of faces of $\Delta$. Then $\P$ has a regular unimodular flag triangulation; in particular, $\P$ is integrally closed and Koszul.
\end{theorem}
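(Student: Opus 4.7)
My plan is to prove Theorem \ref{koszulclass} by explicitly lifting the triangulation $\Delta$ of $\Q$ to a regular unimodular flag triangulation of $\P$ in two stages: first, use the fibration to lift $\Delta$ to a coarse polytopal subdivision of $\P$; second, refine this subdivision by a staircase triangulation in the fiber direction, chosen consistently across cells.

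For the first stage, I would set $\tilde\sigma := f^{-1}(\sigma) \cap \P$ for each $\sigma \in \Delta$. The compatibility hypothesis that $f(F)$ is a union of faces of $\Delta$ for every face $F\subset\P$ ensures that the top and bottom envelopes of $\P$ are piecewise affine over $\Q$ with domains of linearity that are unions of faces of $\Delta$; hence $\{\tilde\sigma : \sigma\in\Delta\}$ is a polytopal subdivision of $\P$. For a $k$-simplex $\sigma\in\Delta$ with vertices $v_0,\ldots,v_k$, the cell $\tilde\sigma$ is affinely equivalent to the Cayley polytope of the fiber segments $f^{-1}(v_0),\ldots,f^{-1}(v_k)$. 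Because $\sigma$ is unimodular (so its only lattice points are its vertices), condition (iii) of Definition \ref{fibration} forces every lattice point of $\tilde\sigma$ to lie in one of these vertex fibers, giving a clean description of the lattice points of $\tilde\sigma$.

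For the second stage, I would fix a total order $\prec$ on the lattice points of $\Q$ and, for each $\sigma$, take the staircase (lexicographic pulling) triangulation of $\tilde\sigma$ induced by $\prec$ together with the natural fiber orientation. A Cayley polytope of lattice segments admits such a triangulation as a regular unimodular flag triangulation, built from monotone paths in the staircase lattice of the varying heights $h_i=|f^{-1}(v_i)|-1$. Since this triangulation depends only on the restriction of $\prec$ to the vertices of $\sigma$, the triangulations on neighboring cells automatically agree on $\widetilde{\sigma\cap\tau}=f^{-1}(\sigma\cap\tau)\cap\P$, and gluing yields a unimodular simplicial subdivision $\tilde\Delta'$ of $\P$ whose vertex set is $\P\cap\ZZ^{d_1}$.

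It then remains to verify flagness and regularity of $\tilde\Delta'$, and to apply Theorem \ref{Sturmfels}. For flagness, any three pairwise adjacent lattice points of $\P$ either lie in a common fiber---where the staircase is a totally ordered chain and hence automatically flag---or else project to at most three distinct vertices of $\Delta$, and the adjacency relations in $\tilde\Delta'$ reduce to adjacency in $\Delta$ because two lattice points in distinct fibers are $\tilde\Delta'$-adjacent precisely when their base images are $\Delta$-adjacent; flagness of $\Delta$ then finishes the argument. For regularity, I would combine a support function $h_\Delta$ for $\Delta$ with a strictly convex fiber-direction function $\phi$ realizing the local staircase triangulations, and take $N\cdot h_\Delta\circ f + \phi$ with $N\gg 0$ as a support function for $\tilde\Delta'$. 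Finally, the existence of a regular unimodular flag triangulation gives the Koszul conclusion via Theorem \ref{Sturmfels}, and the unimodular cover gives integrally closed. The main obstacle will be tracking adjacencies in the variable-height staircase to make flagness explicit, together with the global support function construction; both are essentially the arguments of \cite{DHZ} transplanted from the Nakajima setting to arbitrary lattice segmental fibrations.
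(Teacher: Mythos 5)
Your proposal follows essentially the same route as the paper's proof: pull $\Delta$ back to the coarse regular subdivision $\{f^{-1}(\sigma)\cap\P\}_{\sigma\in\Delta}$ and refine it cell by cell via an order-determined (pulling/staircase) triangulation on the lattice points, then verify regularity, unimodularity and flagness as in \cite{DHZ} and conclude with Theorem \ref{Sturmfels}. One local inaccuracy worth fixing: your claim that two lattice points in distinct fibers are $\tilde\Delta'$-adjacent \emph{precisely} when their base images are $\Delta$-adjacent is false --- already for $f^{-1}(\sigma)\cong\sigma\times[0,1]$ with $\sigma$ an edge, only one diagonal of the square is an edge of the staircase triangulation --- and only the ``only if'' direction holds; that direction, combined with flagness of $\Delta$, reduces flagness to a single Cayley cell, where the minimal non-faces of the staircase triangulation are the incomparable pairs and hence have size two, which completes the argument.
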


Observe that the polytope $\P_m$ in Theorem \ref{highgaps} satisfies the additional condition in Theorem
\ref{koszulclass} (with respect to both triangulations of the unit square as the polytope $\Q$ simultaneously) if and only if $m=0$.

Before outlining the proof of Theorem \ref{koszulclass} we discuss some explicit classes of polytopes this theorem leads to.

\begin{example}[\emph{Nakajima polytopes}]\label{Example}
Assume $\Q\subset\RR^d$ is a lattice polytope and $\alpha,\beta:\Q\to\RR$ are affine maps such that $\alpha(\x),\beta(\x)\in\ZZ$ for all $x\in\Q\cap\ZZ^d$ and $\alpha\le\beta$ on $\Q$. Consider the lattice polytope
$$
\Q(\alpha,\beta):=\conv\big((\x,y)\ |\ \x\in \Q,\ \alpha(\x)\le y\le\beta(\x)\big)\subset\RR^{d+1}.
$$
Then the orthogonal projection $f:\Q(\alpha,\beta)\to \Q$ and any regular unimodular flag triangulation of $\Q$ satisfy the conditions in Theorem \ref{koszulclass}. It is easily seen that $\Q(\alpha,\beta)\cong\Q(0,\beta-\alpha)$ as lattice polytopes.

Iteratively using the $\Q(\alpha,\beta)$-construction, starting with a point, we get exactly the class of polytopes characterized in \cite{Nakajima} as the polytopes $\P$ for which the (complex) \emph{affine} toric variety $\Spec(\CC[M_\P])$ is a local compete intersection. In \cite{DHZ} these polytopes are called \emph{Nakajima polytopes}.
It is clear that the polytopes $\P$ in Theorem \ref{koszulclass} can have arbitrarily more complicated shapes than the ones resulting from the $\Q(\alpha,\beta)$-construction.

The smooth polytopes of type $\P(I_1,I_2,I_3,I_4)$ in Section \ref{Class} are of type $\Q(\alpha,\beta)$ and, therefore, integrally closed and Koszul. More generally, if $Q$ is any smooth polytope and $\alpha,\beta:\Q\to\RR$ are as above, satisfying the stronger condition $\alpha<\beta$ on $\Q$, then $Q(\alpha,\beta)$ is smooth as well.

In particular, iteratively using the $\Q(\alpha,\beta)$-construction with $\alpha<\beta$ on $\Q$, starting with a point, we get smooth Nakajima polytopes in arbitrary dimensions, all combinatorially equivalent to cubes but representing infinitely many affine equivalence classes.

Starting the iteration with other smooth polytopes that admit triangulations with the desired properties, we get richer classes of higher-dimensional smooth Koszul polytopes. For instance, by \cite[Corollary 3.2.5]{BrGuTr}, all lattice smooth polygons can serve as the initial input of this machine.
\end{example}

\begin{example}[\emph{Lattice $A$-fibrations}]\label{Example2} Let $\P\subset\RR^d$ be a lattice polytope cut out by a root system of type $A$. In other words, $\P$
is bounded by hyperplanes parallel to hyperplanes of the form $X_i=0$ and $X_i=X_j$, $1\le i\not=j\le d$ (in the language of \cite{lampostnikov}, $\P$ is an 
\emph{alcoved polytope}). Then $\P$ has a \emph{canonical} nice triangulation $\Delta(\P)$ satisfying the compatibility condition: if $\dim \P>0$ then its orthogonal projection  $\Q\subset\RR^{d-1}$ is also cut out by a root system of type $A$ and the projection $f:\P\to \Q$ satisfies the condition in Theorem~\ref{koszulclass} with respect to $\Delta(\Q)$.

In fact, it was shown in \cite[Section 2]{BrGuTr} that each polytope from the above-mentioned class is nicely triangulated by cutting it along the integer translates
of the coordinate hyperplanes and the hyperplanes of the form $X_i-X_j$. So it is enough to show the following:

\medskip\noindent\emph{Claim.} Let $\P\subset\RR^d$ be a lattice polytope cut out by a root system of type $A$. Then its
orthogonal projection $\Q$ in $\RR^{d-1}$ is also a lattice polytope cut out by a root system of type $A$. 

\medskip Any facet of $\Q$ is the orthogonal projection of either a facet of $\P$ or a codimension-2 face of
$\P$. In the first case the corresponding support hyperplane of $\P$ is of the form $X_i=a$ or $X_i-X_j=b$ for some $a,b\in\ZZ$ and $1\le i\not=j\le d-1$. In particular,
the same equality defines the image facet of $\Q$. In the second case the codimension-2 face of $\P$ in question corresponds to a system of type either $X_d=a$ and
$X_d-X_i=b$ or $X_d-X_i=a$ and $X_d-X_j=b$, where $a,b\in\ZZ$ and $1\le i\not= j\le d-1$. Correspondingly, the image facet of $\Q$ is defined by either $X_i=a-b$ or $X_i-X_j=b-a$. \qed

\medskip One can extend the notion of lattice segmental fibrations as follows: assume $\P\subset\RR^{d_1}$ and $\Q\subset\RR^{d_2}$ are lattice polytopes and $f:\P\to\Q$ is an affine map; call $f$ a \Def{lattice $A$-fibration} if it satisfies the conditions
\begin{enumerate}[{\rm (i)}]
\item $f^{-1}(\x)$ is a lattice polytope for every $\x\in\Q\cap\ZZ^{d_2}$,
\item $\P\cap\ZZ^{d_1}\subset\bigcup_{\Q\cap\ZZ^{d_2}}f^{-1}(\x)$,
\item there is a full-rank affine map $\pi:\P\to\RR^{\dim\P-\dim\Q}$, injective on $f^{-1}(\x)$ for every $\x\in\Q$ and such that $\pi$ induces a surjective group homomorphism onto
$\ZZ^{\dim\P-\dim\Q}$ and $\pi\big(f^{-1}(\x)\big)$ is a lattice polytope, cut out by a root system of type $A$, for every $\x\in\Q\cap\ZZ^{d_2}$.
\end{enumerate}

The class of $A$-fibrations is considerably larger than that of segmental fibrations and, in general, $\P$ is very different from a Nakajima polytope even for simple $\Q$ (e.g., a segment). Let $f:\P\to\Q$ be a lattice $A$-fibration, where $\P\subset\RR^{d_1}$ and $\Q\subset\RR^{d_2}$. In view of the claim above, applied iteratively to the fibers over the lattice points of $\Q$, the map $f$ factors through segmental fibrations
\begin{equation}\label{sequenceoffibrations}
\P\mathrel{\mathop{\longrightarrow}^{\phi_0}}\P_1\mathrel{\mathop{\longrightarrow}^{\phi_1}}\cdots\mathrel{\mathop{\longrightarrow}^{\phi_{k-1}}}
\P_k\mathrel{\mathop{\longrightarrow}^{\phi_k}}\Q \qquad \text{ where } \qquad k=\max\big(\dim f^{-1}(\x)\ |\ \x\in\Q\cap\ZZ^{d_2}\big).
\end{equation}
So one can ask whether Theorem \ref{koszulclass} can be extended to lattice $A$-fibrations. The
obstruction to iteration of Theorem \ref{koszulclass} is that it is not clear whether the condition
on faces in that theorem can be kept under control at each step from $\phi_k$ to $\phi_0$. In fact,
the triangulation of $\P$ resulting from the proof of Theorem \ref{koszulclass}, when both $\P$ and $\Q$ are cut out by a root system of type $A$, is \emph{not} the same as $\Delta(\P)$, not even if $\dim\P=\dim\Q+1$.

However, there is a big subclass of lattice $A$-fibrations for which Theorem \ref{koszulclass} can be iterated along the corresponding sequences (\ref{sequenceoffibrations}). Call a lattice $A$-fibration a \emph{lattice cubical fibration} if in the condition (iii) above we require that the polytope $f^{-1}(\x)$ has the facets parallel to coordinate hyperplanes.
One can easily check that the proof of Theorem \ref{koszulclass} allows one to control the condition on the faces in the theorem at each step from $\phi_k$ to $\phi_0$ when $f:\P\to\Q$ is cubical. Therefore, Theorem \ref{koszulclass} extends to lattice cubical fibrations.
\end{example}

\begin{proof}[Sketch of the proof of Theorem \ref{koszulclass}]
Following the approach in \cite[Section 4.2]{DHZ}, we first take the regular polytopal subdivision $R:=(f^{-1}(\delta))_{\delta\in\Delta}$ of $\P$ and then refine it to a triangulation, using successive stellar subdivisions by the lattice points in $\P$ in any linear order. The regularity, flag, and unimodularity properties of the final outcome are checked exactly the same way as in \cite{DHZ}.

Our original approach (the one we used before we learned about the overlap with \cite{DHZ}) produces different triangulations of $\P$, also refining the polytopal subdivision $R$ but without involving stellar subdivisions. Below we describe the construction.

For a closed subset $Y\subset\RR^{d+1}$, we put
\begin{align*}
&Y^+:=\{\y\in Y\ |\ \y\ \text{has the largest}\ (d+1)\text{st coordinate within}\ f^{-1}(f(\y))\},\\
&Y^-:=\{\y\in Y\ |\ \y\ \text{has the smallest}\ (d+1)\text{st coordinate within}\ f^{-1}(f(\y))\}.
\end{align*}

There is no loss of generality in assuming that $\Q\subset\RR^d$, $\dim \Q=d$, $\P\subset\RR^{d+1}$, and
$f$ is the projection onto the first $(d+1)$-coordinates. % We also put $\e_{d+1}=(0,\ldots,0,1)\in\RR^{d+1}$.
We can assume $(\P\setminus \P^-)\cap\ZZ^{d+1}=\{\y_1,\ldots,\y_r\}$, where
$$
f(\y_i)=f(\y_j)\quad \text{ and } \quad  (\y_i)_{d+1}< (\y_j)_{d+1}\ \quad \text{ imply } \quad i < j \, .
$$

Define the sequence of polytopal complexes $\Pi_0,\Pi_1,\ldots,\Pi_r$ inductively as follows:
\begin{enumerate}[{\rm$\bullet$}]
\item $\Pi_0=\{f^{-1}(\delta)\cap \P^-\}_\Delta$
\item $\Pi_k=\big\{\conv(\y_k,F)\ |\ F\in\starr^+_{\Pi_{k-1}}(\y_k-\e_{d+1})\big\}\cup\Pi_{k-1}$, where
\begin{align*}
\starr^+_{\Pi_{k-1}}(\y_k-\e_{d+1})=\{\tau\in\starr_{\Pi_{k-1}}(\y_k-\e_{d+1})\ |\ \tau&\subset|\Pi_{k-1}|^+\},\\
&k=1,\ldots,r.
\end{align*}
\end{enumerate}
\noindent($|\dots|$ denotes the support of the polytopal complex in question.) That $\Pi_r$ is a triangulation of $\P$ with the desired properties can be shown along the same lines as for the triangulations in \cite{DHZ} (only the regularity needs a minor change in the argument).
\end{proof}

It is interesting to notice that the triangulations $\Pi_r$ are usually \emph{different} from those in \cite[Section 4.2]{DHZ} when the fibers $f^{-1}(\x)$ contain at least $4$ lattice points for several $\x\in\Q\cap\ZZ^d$.

\clearpage

\begin{figure}[h!]
\flushleft
\caption{Two triangulations $\Pi_r$ of $\P$ for two different enumerations of $(\P\setminus\P^-)\cap\ZZ^{d+1}$.}
\includegraphics[trim = 0mm 0in 0mm 1.5in, clip, scale=.4]{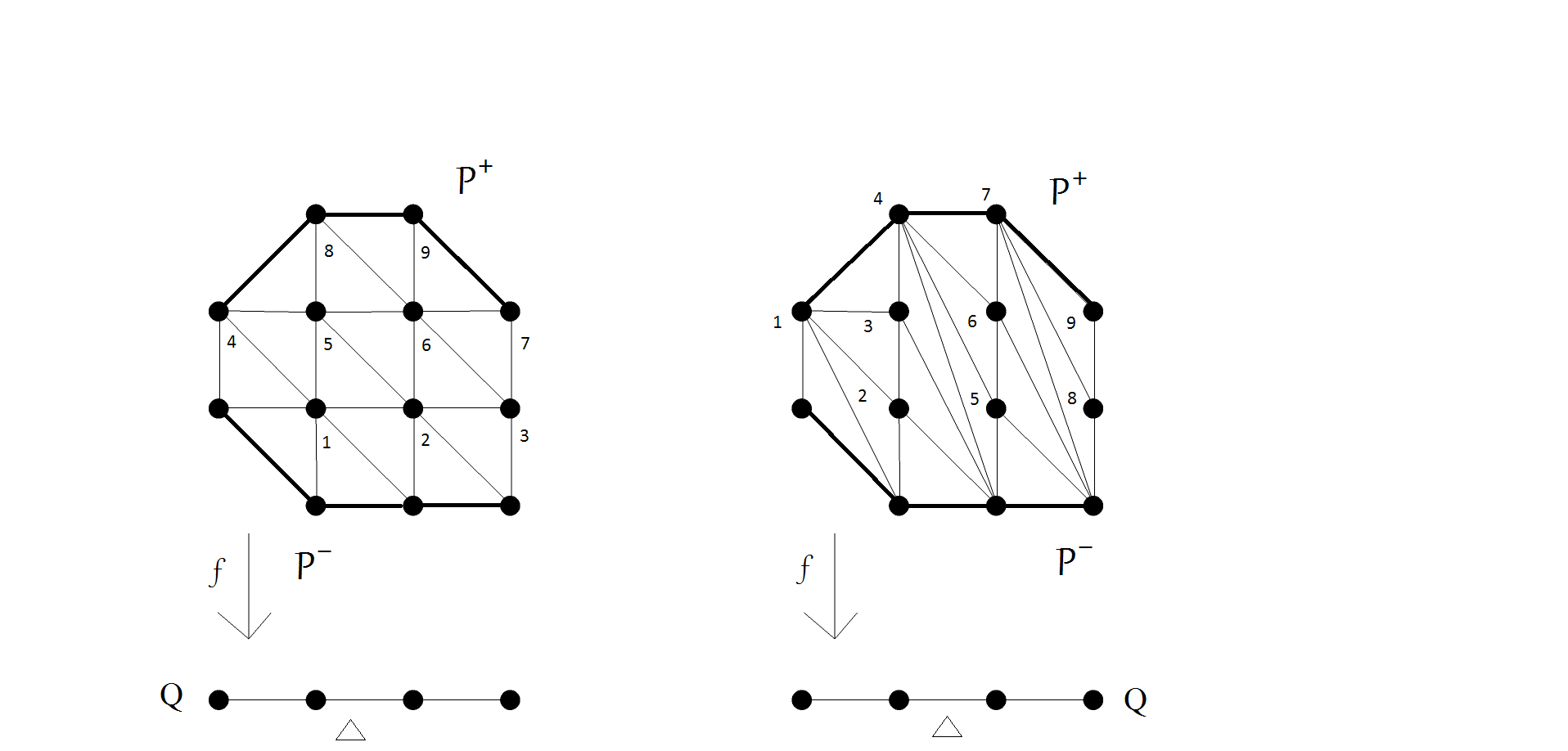}
\end{figure}

\bibliography{bibliography}

\begin{thebibliography}{10}

\bibitem{aim}
{W}orkshop: {C}ombinatorial {C}hallenges in {T}oric {V}arieties.
\newblock April 27 to May 1, 2009, organized by Joseph Gubeladze, Christian
  Haase, and Diane Maclagan, American Institute of Mathematics, Palo Alto.
  \textsf{http://www.aimath.org/pastworkshops/toricvarieties.html}.

\bibitem{mfo}
Mini-{W}orkshop: {P}rojective {N}ormality of {S}mooth {T}oric {V}arieties.
\newblock {\em Oberwolfach Rep.}, 4(3):2283--2319, 2007.
\newblock Abstracts from the mini-workshop held August 12--18, 2007, organized
  by Christian Haase, Takayuki Hibi and Diane Maclagan, Oberwolfach Reports.
  Vol. 4, no. 3.

\bibitem{Discretely}
Matthias Beck and Sinai Robins.
\newblock {\em Computing the continuous discretely}.
\newblock Undergraduate Texts in Mathematics. Springer, 2007.

\bibitem{8authors}
Tristram Bogart, Christian Haase, Milena Hering, Benjamin Lorenz, Benjamin
  Nill, Andreas Paffenholz, Francisco Santos, and Hal Schenck.
\newblock Few smooth $d$-polytopes with ${N}$ lattice points.
\newblock {\em Israel J. Math.}, to appear.

\bibitem{BRUNSquest}
Winfried Bruns.
\newblock The quest for counterexamples in toric geometry.
\newblock In {\em {P}roc. {CAAG} 2010}, volume~17, pages 1--17.

\bibitem{uncovered}
Winfried Bruns and Joseph Gubeladze.
\newblock Normality and covering properties of affine semigroups.
\newblock {\em J. Reine Angew. Math.}, 510:161--178, 1999.

\bibitem{Kripo}
Winfried Bruns and Joseph Gubeladze.
\newblock {\em Polytopes, rings, and {$K$}-theory}.
\newblock Springer Monographs in Mathematics. Springer, 2009.

\bibitem{BrGuTr}
Winfried Bruns, Joseph Gubeladze, and Ng{\^o}~Vi{\^e}t Trung.
\newblock Normal polytopes, triangulations, and {K}oszul algebras.
\newblock {\em J. Reine Angew. Math.}, 485:123--160, 1997.

\bibitem{Vetter}
Winfried Bruns, J{\"u}rgen Herzog, and Udo Vetter.
\newblock Syzygies and walks.
\newblock In {\em Commutative algebra ({T}rieste, 1992)}, pages 36--57. World
  Sci. Publ., River Edge, NJ, 1994.

\bibitem{Caviglia}
Giulio Caviglia.
\newblock The pinched {V}eronese is {K}oszul.
\newblock {\em J. Algebraic Combin.}, 30:539--548, 2009.

\bibitem{CONCA}
Aldo Conca, Emanuela De~Negri, and Maria~Evelina Rossi.
\newblock Koszul algebras and regularity.
\newblock In {\em Commutative algebra}, pages 285--315. Springer, New York,
  2013.

\bibitem{TORICvarieties}
David~A. Cox, John~B. Little, and Henry~K. Schenck.
\newblock {\em Toric varieties}, volume 124 of {\em Graduate Studies in
  Mathematics}.
\newblock American Mathematical Society, Providence, RI, 2011.

\bibitem{DHZ}
Dimitrios~I. Dais, Christian Haase, and G{\"u}nter~M. Ziegler.
\newblock All toric local complete intersection singularities admit projective
  crepant resolutions.
\newblock {\em Tohoku Math. J. (2)}, 53:95--107, 2001.

\bibitem{Froberg}
Ralf Fr{\"o}berg.
\newblock Koszul algebras.
\newblock In {\em Advances in commutative ring theory ({F}ez, 1997)}, volume
  205 of {\em Lecture Notes in Pure and Appl. Math.}, pages 337--350. Dekker,
  New York, 1999.

\bibitem{Algeo}
Robin Hartshorne.
\newblock {\em Algebraic geometry}.
\newblock Springer-Verlag, 1977.
\newblock Graduate Texts in Mathematics, No. 52.

\bibitem{Hering}
Milena Hering.
\newblock Multigraded regularity and the {K}oszul property.
\newblock {\em J. Algebra}, 323:1012--1017, 2010.

\bibitem{Akihiro}
Akihiro Higashitani.
\newblock Non-normal very ample polytopes and their holes.
\newblock {\em Electron. J. Combin.}, 21:Paper 1.53, 12, 2014.

\bibitem{katthan}
Lukas Katth{\"a}n.
\newblock Polytopal affine semigroups with holes deep inside.
\newblock {\em Discrete Comput. Geom.}, 50:503--508, 2013.

\bibitem{Toroidal}
George Kempf, Finn~Faye Knudsen, David Mumford, and Bernard Saint-Donat.
\newblock {\em Toroidal embeddings. {I}}.
\newblock Lecture Notes in Mathematics, Vol. 339. Springer-Verlag, 1973.

\bibitem{lampostnikov}
Thomas Lam and Alexander Postnikov.
\newblock Alcoved polytopes. {I}.
\newblock {\em Discrete Comput. Geom.}, 38(3):453--478, 2007.

\bibitem{JaMichalVeryAmple}
Micha{\l} Laso\'n and Mateusz Micha{\l}ek.
\newblock Non-normal, very ample polytopes---constructions and examples.
\newblock Preprint: http://arxiv.org/abs/1406.4070.

\bibitem{Nakajima}
Haruhisa Nakajima.
\newblock Affine torus embeddings which are complete intersections.
\newblock {\em Tohoku Math. J. (2)}, 38:85--98, 1986.

\bibitem{Toric}
Tadao Oda.
\newblock {\em Convex bodies and algebraic geometry}, volume~15 of {\em
  Ergebnisse der Mathematik und ihrer Grenzgebiete (3) [Results in Mathematics
  and Related Areas (3)]}.
\newblock Springer, 1988.

\bibitem{Ohsugi}
Hidefumi Ohsugi, J{\"u}rgen Herzog, and Takayuki Hibi.
\newblock Combinatorial pure subrings.
\newblock {\em Osaka J. Math.}, 37:745--757, 2000.

\bibitem{PAYNEroot}
Sam Payne.
\newblock Lattice polytopes cut out by root systems and the {K}oszul property.
\newblock {\em Adv. Math.}, 220:926--935, 2009.

\bibitem{Peeva}
Irena Peeva.
\newblock Infinite free resolutions over toric rings.
\newblock In {\em Syzygies and {H}ilbert functions}, volume 254 of {\em Lect.
  Notes Pure Appl. Math.}, pages 233--247. 2007.

\bibitem{Priddy}
Stewart~B. Priddy.
\newblock Koszul resolutions.
\newblock {\em Trans. Amer. Math. Soc.}, 152:39--60, 1970.

\bibitem{DIOPHANTINE}
Jorge~L. Ram{\'{\i}}rez~Alfons{\'{\i}}n.
\newblock {\em The {D}iophantine {F}robenius problem}, volume~30 of {\em Oxford
  Lecture Series in Mathematics and its Applications}.
\newblock Oxford University Press, 2005.

\bibitem{Santos}
F.~Santos and G.~M. Ziegler.
\newblock Unimodular triangulations of dilated 3-polytopes.
\newblock {\em Trans. Moscow Math. Soc.}, pages 293--311, 2013.

\bibitem{STURMpol}
Bernd Sturmfels.
\newblock {\em Gr\"obner bases and convex polytopes}, volume~8 of {\em
  University Lecture Series}.
\newblock American Mathematical Society, 1996.

\end{thebibliography}
\bibliographystyle{plain}

\end{document}